\documentclass[reqno,12pt]{amsart}
\usepackage{bbm}
\usepackage[all,pdf]{xy}
\usepackage{epsfig}
\usepackage{amsmath}
\usepackage{amssymb}
\usepackage{amscd}
\usepackage{graphicx}
\usepackage{enumerate}
\usepackage{comment}
\usepackage[colorlinks=true]{hyperref}
\hypersetup{urlcolor=red, citecolor=blue}
\makeatletter
\@namedef{subjclassname@2020}{%
	\textup{2020} Mathematics Subject Classification}

\topmargin=0pt

\oddsidemargin=0pt

\evensidemargin=0pt

\textwidth=15cm

\textheight=22cm

\raggedbottom

\overfullrule5pt

\newtheorem{thm}{Theorem}[section]
\newtheorem{lem}[thm]{Lemma}

\newtheorem{cl}[thm]{Claim}
\newtheorem{prop}[thm]{Proposition}

\newtheorem{cor}[thm]{Corollary}

\newtheorem{rem}[thm]{Remark}

\def \N {\mathbb N}

\def \Z {\mathbb Z}

\def \M {\mathcal M}

\def\supp {\mathrm{supp}}

\def\Pu {\mathcal{ P}^\mu_X}
\def\Bm {\mathcal{B}_X^\mu}
\def\mxg {\mathcal{M}(X,G)}
\parskip 1.0ex
\numberwithin{equation}{section}

\begin{document}
	\title[Independence, sequence entropy and mean
	sensitivity]{Independence, sequence entropy and mean
		sensitivity for invariant measures}

	\author{Chunlin Liu}
	\address[C. Liu]{School of Mathematical Sciences, Dalian University of Technology, Dalian, 116024, P.R. China}
	\email{chunlinliu@mail.ustc.edu.cn}	
	
	\author{Leiye Xu}
	\address[L. Xu]{School of Mathematical Sciences, University of Science and Technology of China, Hefei, Anhui, 230026, P.R. China}
	\email{leoasa@mail.ustc.edu.cn}
	
	\author{Shuhao Zhang}
	\address[S. Zhang]{School of Mathematical Sciences, University of Science and Technology of China, Hefei, Anhui, 230026, P.R. China}
	\email{yichen12@mail.ustc.edu.cn}

	\subjclass[2020]{37A15, 37A35, 37B05}

	\keywords{IT tuple, sequence entropy tuple, mean sensitive tuple}
	
	\begin{abstract}We investigate the connections between independence, sequence entropy, and mean sensitivity for a measure preserving system under the action of a countable infinite discrete group. We establish that every sequence entropy tuple for an invariant measure is an IT tuple. 
		Furthermore, if the acting group is amenable, we show that for an ergodic measure, the sequence entropy tuples, the mean sensitive tuples along some  tempered F{\o}lner sequence, and the sensitive in the mean tuples along  some  tempered F{\o}lner sequence coincide. 
	\end{abstract}
	
	\maketitle
	\section{Introduction}
	
	Throughout this paper, 	 unless otherwise stated, 
	we assume that  $G$ is a countable infinite discrete group and $X$ is a compact metric
	space with a metric $d$ and $\mathcal{B}_X$ is the Borel $\sigma$-algebra on $X$. By a \emph{topological dynamical system} (tds for short)  we mean a pair $(X,G)$, where $G$ acts on $X$ as a group of homeomorphisms. Denote by $\mathcal{M}(X,G)$ and $\mathcal{M}^e(X,G)$ the set of  $G$-invariant  probability measures and the set of ergodic invariant measures on $(X,\mathcal{B}_X)$, respectively.  By a \emph{measure preserving system} (mps for short)  we mean a triple $(X,\mu,G)$, where $(X,G)$ is a tds and $\mu\in\mxg$.

	Given a dynamical system, one might not only be interested in its overall complexity but also in identifying the specific points in the phase space that contribute to its complexity. For the case $G=\mathbb{Z}$, Blanchard \cite{B1} introduced the concept of entropy pairs to address this question, using them to locate where the entropy lies. He showed that a tds has positive entropy if and only if it has essential entropy pairs. Building on this foundation, Blanchard et al. \cite{B2} extended the notion to define entropy pairs for an invariant probability measure. Later, Glasner and Weiss \cite{GW} generalized this to entropy tuples, while Huang and Ye \cite{HY} further developed entropy tuples in the context of invariant measures.  This approach, now known as local entropy theory  has become a cornerstone in the study of entropy theory. 
	
	As in classical local entropy theory, sequence entropy, originally introduced by Ku\v{s}nirenko \cite{K1967_entropy}, can also be studied in a localized manner. In \cite{HLSY1, Huang_Sequenceentropypairs}, Huang, Li, Maass, Shao and Ye explored sequence entropy tuples in both measure-theoretic and topological contexts, providing foundational results and applications. For a broader introduction of local entropy theory, we refer the reader to the surveys by Glasner and Ye \cite{GY} and Garc\'{\i}a-Ramos and Li \cite{GH} for a comprehensive overview.
	
	Kerr and Li \cite{KH, KH09} extended this framework by employing combinatorial techniques to investigate  entropy tuples and sequence entropy tuples via the concept of independence sets. Notably, both entropy pairs and sequence entropy pairs have been extensively studied from both measure-theoretic and topological perspectives. 
	Meanwhile, they also introduced IT tuples to study tameness, originally defined by Köhler \cite{K} (see an equivalent definition by Glasner \cite{G3}), and proved that every IT tuple is a topological sequence entropy tuple. It is well-established that  each measure-theoretic sequence entropy tuple is also a topological sequence entropy tuple \cite[Corollary 3.6]{Huang_Sequenceentropypairs}. This naturally raises the question: what is the relationship between measure-theoretic sequence entropy tuples and IT tuples?
	
	In this paper, we answer this question by establishing that every sequence entropy tuple for an invariant measure is an IT tuple. More concretely,
	for a tds $(X,G)$ and a $K$-tuple $(A_1,\ldots, A_K)$ of subsets of $X$,
	we say that a set $J\subset G$ is an independence set for $(A_1,\ldots, A_K)$ if for every nonempty finite subset
	$I\subset J$ and function $\sigma : I\to  \{1, 2,\ldots, K\}$ we have
	$$\cap_{g\in I}g^{-1}(A_{\sigma(g)})\neq\emptyset.$$
	
		Given $K\ge2$, a tuple $ (x_k)_{k=1}^K\in X^{(K)}$ is said to be an {\it IT $K$-tuple (or IT pair for $K=2$)}  if for any open
		neighborhood $U_k$ of $x_k$, $k=1,2,\ldots,K$, there exists an infinite  independence set for $(U_1,\ldots,U_K)$. Denote by $IT_K(X,G)$ the set of IT $K$-tuples.

	Now we state our first result in this paper.
	\begin{thm}\label{main_abel}
		Let $(X, G)$ be a tds, and  $\mu \in \mathcal{M}(X, G)$. Then \begin{equation}\label{eq:2012}
			SE_K^\mu(X,G)\subset IT_K(X,G),
		\end{equation}
		where $SE_K^\mu(X,G)$ is the set of sequence entropy $K$-tuples for the measure $\mu$. Thus, combining this with the result in \cite{KH}, one has 
		\[\overline{\cup_{\mu\in\M(X,G)}SE_K^\mu(X,G)}\subset IT_K(X,G)\subset SE_K(X,G),\]
		where $SE_K(X,G)$ is the set of topological sequence entropy $K$-tuples.
	\end{thm}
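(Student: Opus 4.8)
The plan is to fix $(x_1,\dots,x_K)\in SE_K^\mu(X,G)$ together with arbitrary open neighborhoods $U_k\ni x_k$ and to produce an \emph{infinite} independence set for $(U_1,\dots,U_K)$; as the $U_k$ are arbitrary this is exactly $(x_1,\dots,x_K)\in IT_K(X,G)$. Shrinking, I fix Borel sets $A_k$ with $\ol{A_k}\subset U_k$ and remainder $A_0=X\setminus\bigcup_{k\ge1}A_k$, so that any combinatorial independence of the atoms $A_1,\dots,A_K$ transfers to $(U_1,\dots,U_K)$. By the defining property of a $\mu$-sequence entropy tuple, a finite Borel partition $\alpha$ adapted to this data (e.g.\ an admissible refinement of $\{A_0,A_1,\dots,A_K\}$) has positive sequence entropy along some sequence $S=(s_i)_{i\in\N}$ in $G$, i.e.\ $h_\mu^S(\alpha)=\limsup_n\frac1n H_\mu(\bigvee_{i=1}^n s_i^{-1}\alpha)>0$.

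The first, combinatorial, half turns this into finite independence. Since $H_\mu(\bigvee_{i\le n}s_i^{-1}\alpha)\le\log\#\{\text{positive-measure atoms}\}$, a positive entropy rate forces exponentially many positive-measure atoms, and feeding their $\{0,1,\dots,K\}$-patterns into a Sauer--Shelah/Karpovsky--Milman estimate of the kind used by Kerr--Li \cite{KH} yields, for each large $n$, a set $\Lambda_n\subset\{1,\dots,n\}$ with $|\Lambda_n|\ge cn$ on which \emph{every} labelling in $\{1,\dots,K\}^{\Lambda_n}$ is carried by a positive-measure atom. Thus $\{s_i:i\in\Lambda_n\}$ is a $\mu$-independence set for $(A_1,\dots,A_K)$, so $\bigcap_g g^{-1}A_{\sigma(g)}\neq\emptyset$ for every $\sigma$, and hence (using $\ol{A_k}\subset U_k$) $(U_1,\dots,U_K)$ already admits arbitrarily large \emph{finite} independence sets. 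This alone recovers the known inclusion $SE_K^\mu(X,G)\subset SE_K(X,G)$.

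The main obstacle is to upgrade these finite sets to one infinite independence set — the gap between a sequence entropy tuple and an IT tuple — and here the \emph{measure}, not mere topology, must do the work, since a downward-closed family of finite independence sets need not contain an infinite member. The route I would take is to strengthen combinatorial independence to \emph{asymptotic} independence: because $(x_1,\dots,x_K)$ is a $\mu$-sequence entropy tuple, the relevant observables are not measurable with respect to the maximal equicontinuous (Kronecker) factor, so along a suitable sequence the translates $s_i^{-1}A_k$ decorrelate, $\mu(\bigcap_{i\in F}s_i^{-1}A_{\sigma(i)})\to\prod_{i\in F}\mu(A_{\sigma(i)})$. Choosing a sparse subsequence $s_{i_1},s_{i_2},\dots$ inductively, so that each newly added conditional distribution over $\{A_1,\dots,A_K\}$ stays within a bounded factor of the marginals, keeps $\mu(\bigcap_j s_{i_j}^{-1}A_{\sigma(j)})$ comparable to $\prod_j\mu(A_{\sigma(j)})>0$ for every finite pattern, so $\{s_{i_j}\}$ is the sought infinite $\mu$-independence set. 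I expect the delicate points to be extracting the quantitative local decorrelation from the sequence-entropy-tuple hypothesis (a local Kushnirenko-type input) and carrying it out for a \emph{general} countable group, where the F{\o}lner-based averaging used in the amenable case — forming a shift-invariant measure on the closed set of indicators of independence sets and reading off an infinite configuration by recurrence — is unavailable.

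Given an infinite independence set $J$ for $(A_1,\dots,A_K)$, the relations $\ol{A_k}\subset U_k$ make $J$ an infinite independence set for $(U_1,\dots,U_K)$; as the neighborhoods were arbitrary, $(x_1,\dots,x_K)\in IT_K(X,G)$, which is \eqref{eq:2012}. The remaining two inclusions are then formal: $IT_K(X,G)$ is closed, hence contains the closure on the left-hand side, while $IT_K(X,G)\subset SE_K(X,G)$ is exactly the Kerr--Li implication \cite{KH} that every IT tuple is a topological sequence entropy tuple.
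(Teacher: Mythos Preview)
Your combinatorial first half is fine and, as you note, already reproduces $SE_K^\mu\subset SE_K$ via Kerr--Li shattering. The second half, however, contains a real gap. The assertion that ``the translates $s_i^{-1}A_k$ decorrelate, $\mu(\bigcap_{i\in F}s_i^{-1}A_{\sigma(i)})\to\prod_{i\in F}\mu(A_{\sigma(i)})$'' because the $A_k$ are not Kronecker-measurable is not correct: non-triviality of $H_\mu(\alpha\mid\mathcal{K}_\mu)$ is far weaker than any mixing-type statement. Even for $G=\Z$ and a weakly mixing system you only get decorrelation along a density-one set of times, not along an entire sequence, and there is no mechanism to make the approximate independence \emph{uniform over all $K^{|F|}$ patterns} as $|F|\to\infty$, which is what your inductive sparsification needs. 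Nothing in your outline produces, at step $n$, a single group element $g$ for which \emph{every} intersection $\bigcap_{j<n}s_{i_j}^{-1}A_{\sigma(j)}\cap g^{-1}A_k$ remains nonempty for all $\sigma$ and all $k$; without that the infinite tree collapses.

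The paper takes an entirely different route that avoids combinatorics. Via Proposition~\ref{suppordisin} a $\mu$-sequence entropy tuple lies in $\operatorname{supp}(\lambda_K^X)$, so there is a compact $L\subset X$ of positive $\mu$-measure on which the Kronecker-disintegration measures $\mu_x$ charge every $U_k$. The independence set is then built one element at a time by a tree construction (Claim~\ref{cl-1}): given open sets $W_1,\dots,W_M$ with $\mu(W_m\cap L)>0$, pass to the product system $X^{(MK)}$, pick an ergodic component $\rho$ of $\mu^{(MK)}$ that charges $\prod_m(W_m\cap L)^{(K)}$, observe from Lemma~\ref{lem-11} that $\rho=\int\mu_{x_{1,1}}\times\cdots\times\mu_{x_{M,K}}\,d\rho$ so that $\rho((U_1\times\cdots\times U_K)^{(M)})>0$, and use \emph{ergodicity} of $\rho$ to find a single $g_*\notin F$ with $\mu(W_m\cap L\cap g_*^{-1}U_k)>0$ for all $m,k$. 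This is the missing ingredient in your plan: the ``one new group element that works for all branches simultaneously'' comes from ergodic recurrence in a high-power product system, not from any decorrelation along a pre-chosen sequence.
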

	\begin{rem}
		The inclusion relation in \eqref{eq:2012} can be strict. For instance, consider a $\Z$-tds that is minimal, topologically (strongly) mixing, and admits an ergodic measure $\mu$ with  discrete spectrum. Such a system does not have any $\mu$-sequence entropy tuple, yet it possesses arbitrarily large IT tuples (see e.g. \cite[Theorem 8.6]{KH}). The existence of this class of systems is supported by the fact that every ergodic mps has a strictly ergodic model that is topologically mixing \cite{Lehrer1987}.
	\end{rem}
	
Building on the results previously obtained via the localization approach, we now turn to deriving results on global complexity. Given $K\ge 2$, we call an $K$-tuple $\left(x_1, x_2, \ldots, x_K\right) \in X^{(K)}$ \textit{essential} if $x_i \neq x_j$ for all $1 \leq i < j \leq K$.  
 Huang and Ye \cite[Theorem 3.8]{Huang_Ye2009}  established that for $G = \mathbb{Z}$ and $\mu \in \mathcal{M}(X, G)$, the maximal $\mu$-sequence entropy  $h_\mu^*(G) > \log(K-1)$ implies $(X,G)$ has essential sequence entropy $K$-tuples for $\mu$. We now generalize this result to the setting of countable infinite discrete groups.
	\begin{thm}\label{thm-sequence-entropy}	Let $(X, G)$ be a tds and $\mu\in\mathcal{M}(X,G)$. Let $K\in\N$ with $K\ge 2$. If  $(X,G)$ has  no essential sequence entropy $K$-tuples for $\mu$, then $h_\mu^*(G)\le \log(K-1)$.
	\end{thm}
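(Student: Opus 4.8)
The plan is to prove the contrapositive: assuming that $(X,G)$ has no essential sequence entropy $K$-tuple for $\mu$, I will show $h_\mu^*(G)\le\log(K-1)$. Since by definition $h_\mu^*(G)=\sup_\alpha\sup_S h_\mu^S(\alpha)$, where the suprema run over all finite Borel partitions $\alpha$ of $X$ and all sequences $S=(g_n)_{n\ge1}$ in $G$, and since this definition makes no use of a F\o lner structure, it suffices to establish the uniform bound $h_\mu^S(\alpha)\le\log(K-1)$ for every such $\alpha$ and $S$. This reduction is what lets the statement hold for an arbitrary countable infinite discrete group, in contrast with the second main result.

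The first main step is a \emph{localization} of the sequence entropy of a partition, in the spirit of the passage from positive entropy of a partition to entropy tuples. Fix $\alpha=\{A_1,\dots,A_n\}$ and a sequence $S$ with $h_\mu^S(\alpha)>0$. Using the weak-$*$ compactness of $\mxg$ together with a disintegration/limit-point argument along $S$, I would extract a tuple $(x_i)_{i\in I}$ with $x_i\in\overline{A_i}$ for distinct indices $i\in I\subset\{1,\dots,n\}$, $|I|\ge 2$, that is a sequence entropy tuple for $\mu$; more precisely, I would prove the quantitative form that for every partition admissible with respect to a tuple, the maximal number of atom-closures that can simultaneously carry an essential sequence entropy tuple controls $h_\mu^*(\alpha)$. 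Here the measure-theoretic independence description of $SE_K^\mu(X,G)$ underlying Theorem \ref{main_abel}, and the characterization of measure sequence entropy tuples from \cite{HLSY1,Huang_Sequenceentropypairs}, supply the arbitrarily large independence sets needed without any appeal to an ordering of $G$.

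The second step is the quantitative independence bound. Under the standing hypothesis, every essential sequence entropy tuple has length at most $K-1$; equivalently, no $K$ distinct points can be simultaneously separated by iterates along any sequence. I would convert this into the statement that the family $\{g_n^{-1}\alpha\}_{n}$ admits no measure-theoretic independence of order $K$ among its atoms, and then invoke a Sauer--Shelah-type counting to force the exponential growth rate of $H_\mu\!\bigl(\bigvee_{i=1}^m g_{n_i}^{-1}\alpha\bigr)$ to be at most $\log(K-1)$. Combined with the localization of Step~1, this gives $h_\mu^S(\alpha)\le\log(K-1)$ uniformly in $\alpha$ and $S$, hence $h_\mu^*(G)\le\log(K-1)$, which is the contrapositive of the theorem; the route also recovers the $G=\mathbb Z$ statement of \cite[Theorem 3.8]{Huang_Ye2009}.

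The main obstacle is the quantitative localization in Step~1 for a general group. The original argument of Huang and Ye for $G=\mathbb Z$ relies on the linear order of $\mathbb Z$ and on Rokhlin-tower constructions, which control only the \emph{positivity} of the maximal sequence entropy and not directly its \emph{value}; the delicate point is to obtain an order-free extraction that pins the value of $h_\mu^*(\alpha)$ to the maximal order of essential sequence entropy tuples supported on the atom-closures, and to match the threshold $\log(K-1)$ exactly. I expect the independence-set machinery developed for Theorem \ref{main_abel} to bridge precisely this gap, since it produces the requisite arbitrarily large independence sets directly from the combinatorial definition and is insensitive to the structure of $G$.
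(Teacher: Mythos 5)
Your proposal is not a proof but a plan whose decisive step is explicitly left open. The ``quantitative localization'' of your Step~1 --- that the maximal order of essential sequence entropy tuples carried by atom-closures controls the \emph{value} of $h_\mu^*(G,\alpha)$, with the sharp threshold $\log(K-1)$ --- is essentially the content of the theorem itself, and you defer it to ``I would prove'' before conceding that it is ``the main obstacle,'' expressing only the expectation that the machinery behind Theorem \ref{main_abel} will bridge the gap. Your Step~2 rests on a second unestablished conversion: the hypothesis is a pointwise statement about $K$-tuples of points, and nothing in the sketch shows it forbids measure-theoretic independence of order $K$ among the atoms of $\bigvee_{i} g_{n_i}^{-1}\alpha$; a Sauer--Shelah count cannot even start until that implication is in place. (A side remark: what you call the contrapositive is in fact the implication exactly as stated.)

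The missing idea is the Kronecker algebra, which the paper uses in place of any localization or combinatorial counting. Write $\mu=\int_X\mu_x\,d\mu(x)$ for the disintegration of $\mu$ over $\mathcal{K}_\mu(G)$. The paper's Lemma \ref{lem"sim. ob} shows that the hypothesis forces $|\supp(\mu_x)|\le K-1$ for $\mu$-a.e.\ $x$: otherwise Lusin's theorem gives a positive-measure compact set on which $x\mapsto\mu_x$ is continuous, and an essential $K$-tuple inside some $\supp(\mu_z)$ is then shown, via Lemma \ref{lem-1}, to lie in $\supp(\lambda_K^X)$ where $\lambda_K^X=\int_X \mu_x\times\cdots\times\mu_x\,d\mu(x)$, hence to be a $\mu$-sequence entropy $K$-tuple by Proposition \ref{suppordisin} --- a contradiction. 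Once this support bound is known, for every $\alpha\in\mathcal{P}_X^\mu$ at most $K-1$ atoms of $\alpha$ receive positive $\mu_x$-mass for a.e.\ $x$, so concavity of $-t\log t$ gives $H_\mu(\alpha|\mathcal{K}_\mu(G))\le\log(K-1)$, and Theorem \ref{thm-se-enytopy} (valid for arbitrary countable $G$, which is what makes the whole argument order-free, not the weak$^*$ compactness you invoke) converts this into $h_\mu^*(G,\alpha)=H_\mu(\alpha|\mathcal{K}_\mu(G))\le\log(K-1)$. This is where the exact constant comes from; no extraction of tuples from a positive-entropy partition and no counting lemma appears anywhere. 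If you wish to salvage your route, the bridge you are looking for in both of your steps is precisely this disintegration: Proposition \ref{suppordisin} identifies sequence entropy tuples with the support of $\lambda_K^X$, and every appeal to ``independence'' in your sketch should be replaced by an appeal to the fiber measures $\mu_x$.
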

	\begin{rem}
The converse of this theorem may not hold. That is, for $ \mu \in \mathcal{M}(X, G) $, even if $(X, G)$ admits essential sequence entropy $K$-tuples for $\mu$, it is possible that $h_\mu^*(G) < \log K$.

For instance, let $\mu_1, \mu_2 \in \mathcal{M}(X, G)$ with $h_{\mu_1}^*(G) = \log 2$ and $h_{\mu_2}^*(G) = \log 3$, and consider the convex combination $\mu = \frac{1}{2} \mu_1 + \frac{1}{2} \mu_2$. Then it is easy to verify that  
\[
h_\mu^*(G) = \frac{1}{2} \log 2 + \frac{1}{2} \log 3 < \log 3, \quad \text{while} \quad SE_3^\mu(X, G) = SE_3^{\mu_2}(X, G) \neq \emptyset.
\]

However, if $\mu$ is ergodic, then the converse does hold (see \cite[Theorem 3.8]{Huang_Ye2009} for the case $G = \mathbb{Z}$ and \cite[Theorem 2.9]{LiuWangxu2024} for general $G$).
	\end{rem}
	
	  Huang \cite{H1}  showed that a tame tds has zero maximal sequence entropy for any invariant measure.
 Kerr and Li \cite{KH} later characterized tameness by showing that a tds is tame if and only if it admits no essential IT pairs.
 These two results naturally lead to the following question: Does the absence of essential IT tuples impose an upper bound on the measure-theoretic sequence entropy?
 By combining Theorems \ref{main_abel} and \ref{thm-sequence-entropy}, we provide an affirmative answer by extending Huang's result to the  multiple case.
	\begin{cor}\label{cor-entropy-bound}
		Let $(X, G)$ be a tds. If $X$ admits no essential IT $K$-tuples, then 	for every $\mu \in \mathcal{M}(X, G)$,
		$$
		h_\mu^*(G) \leq \log (K-1).
		$$
	\end{cor}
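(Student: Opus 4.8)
The plan is to argue by contraposition at the level of individual tuples: I will use Theorem \ref{main_abel} to convert any essential sequence entropy $K$-tuple for $\mu$ into an essential IT $K$-tuple, and then invoke Theorem \ref{thm-sequence-entropy} to extract the bound on the maximal sequence entropy.

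First I would fix an arbitrary $\mu \in \mathcal{M}(X,G)$ and claim that $(X,G)$ has no essential sequence entropy $K$-tuple for $\mu$. Suppose toward a contradiction that some $(x_1,\ldots,x_K) \in SE_K^\mu(X,G)$ is essential, i.e. $x_i \neq x_j$ for all $1 \le i < j \le K$. By Theorem \ref{main_abel} we have the inclusion $SE_K^\mu(X,G) \subset IT_K(X,G)$, so the very same tuple $(x_1,\ldots,x_K)$ lies in $IT_K(X,G)$. Since essentiality is a property of the coordinates of the tuple alone, it is preserved verbatim under this set inclusion, and hence $(x_1,\ldots,x_K)$ is an essential IT $K$-tuple. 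This contradicts the hypothesis that $X$ admits no essential IT $K$-tuple, so no essential sequence entropy $K$-tuple for $\mu$ exists.

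Second, with the absence of essential sequence entropy $K$-tuples for $\mu$ now established, I would apply Theorem \ref{thm-sequence-entropy} directly to conclude $h_\mu^*(G) \le \log(K-1)$. As $\mu \in \mathcal{M}(X,G)$ was arbitrary, the bound holds for every invariant measure, which is exactly the assertion of the corollary.

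The argument is a formal combination of the two theorems, so I expect no genuine obstacle beyond bookkeeping. The only point requiring care is the transport of the essentiality condition across Theorem \ref{main_abel}: one must observe that the distinctness of coordinates assumed on the sequence entropy side is literally the same distinctness condition forbidden on the IT side, so that the inclusion $SE_K^\mu(X,G)\subset IT_K(X,G)$ restricts to an inclusion of essential tuples. The substantive dynamical content is carried entirely by Theorems \ref{main_abel} and \ref{thm-sequence-entropy}.
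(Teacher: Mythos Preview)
Your argument is correct and matches the paper's approach exactly: the paper does not spell out a proof but simply states that the corollary follows by combining Theorems~\ref{main_abel} and~\ref{thm-sequence-entropy}, which is precisely the contrapositive chain you wrote down. The only nontrivial observation you made explicit---that essentiality is preserved under the inclusion $SE_K^\mu(X,G)\subset IT_K(X,G)$---is immediate, so there is nothing to add.
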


	In addition to the  entropy,  the sensitivity is another candidate to describe the complexity. Huang, Lu and Ye \cite{HLY} studied sensitivity and its local representation from measure-theoretic viewpoint. The notion of mean sensitivity  was investigated by Garc\'{\i}a-Ramos \cite{G17}. Li \cite{L16} introduced the concept of mean $n$-sensitivity and showed that an ergodic mps is mean $n$-sensitive if and only if its maximal sequence entropy with respect to this ergodic measure is not less than $\log n$. Li, Ye and Yu \cite{LYY22} introduced the notion of   $n$-sensitivity in the mean, which was proven to be equivalent to mean $n$-sensitivity in the ergodic case.
	
Using the idea of localization, García-Ramos \cite{G17} introduced the notion of mean sensitive pairs for a measure. Li and Yu \cite{LY21} extended this concept to tuples and showed that every entropy tuple of an ergodic measure-preserving system is a mean sensitive tuple. However, the authors of this paper left an open question about whether sequence entropy tuples in a mps can be characterized using mean sensitivity tuples.  This question was recently resolved by García-Ramos and Muñoz-López \cite{GM22} for pairs in ergodic systems with abelian group actions (a similar result for tuples was established in \cite{LLTS2024}). For non ergodic case, an counterexample appears in \cite[Theorem 1.6]{LLTS2024}. 
	However, previous methods are restricted to abelian groups, leaving the problem unresolved for non-abelian groups. In this paper, we provide a positive answer for ergodic systems with amenable group actions.
	
	\begin{thm}\label{cor:se=sm}Let $(X, G)$ be a tds, where $G$  is amenable, and let $\mu\in \mathcal{M}^e(X, G )$. For any tempered F{\o}lner sequence $\mathcal{F}$   of $G$ and $K\in\N$, the $\mu$-sequence entropy $K$-tuple, the $\mu$-mean sensitive $K$-tuple  along $\mathcal{F}$ and the $\mu$-sensitive in the mean $K$-tuple along $\mathcal{F}$ coincide.
	\end{thm}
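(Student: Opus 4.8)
The plan is to establish the three-fold coincidence through a short cyclic chain of inclusions, reducing each notion to a single structural condition read off the maximal equicontinuous (Kronecker) factor. Since $G$ is amenable and $\mu$ is ergodic, I first record that along the tempered F{\o}lner sequence $\mathcal{F}=(F_n)$ Lindenstrauss's pointwise ergodic theorem holds: for $f\in L^1(\mu)$ one has $\frac{1}{|F_n|}\sum_{g\in F_n}f(gx)\to\int f\,d\mu$ for $\mu$-a.e.\ $x$. This is precisely where the tempered hypothesis enters, and it is the device that lets me move between orbit-wise Besicovitch averages and genuinely measure-theoretic quantities; it plays the role that commutativity and classical averaging played in the abelian treatments of \cite{GM22, LLTS2024}.

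I would first settle the equivalence of the mean sensitive and the sensitive-in-the-mean tuples, which costs little beyond the boundedness of $d$. If along a set of $g\in F_n$ of positive upper density the coordinates separate by at least $\delta$ inside the prescribed neighborhoods, then the associated Besicovitch average is at least $\delta$ times that density, so a sensitive-in-the-mean $K$-tuple is mean sensitive. Conversely, writing $D=\operatorname{diam}(X)$ and splitting the F{\o}lner average of the separation function at the threshold $\delta/2$, a lower bound $\delta$ on the upper average forces the upper density of the separating times to be at least $\frac{\delta/2}{D-\delta/2}>0$. Ergodicity and the pointwise ergodic theorem guarantee that these densities may be evaluated either along a single generic orbit or by integrating against $\mu$, so the two metric notions genuinely coincide.

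It then remains to match these sensitivity tuples with the sequence entropy tuples, and I would route this through the Kronecker factor $\pi\colon(X,\mu,G)\to(Z,\nu,G)$, the maximal factor with discrete spectrum. For amenable $G$ the Kushnirenko-type dichotomy (discrete spectrum $\Leftrightarrow$ zero sequence entropy) and its localization are available, so that the $\mu$-sequence entropy $K$-tuples are exactly the essential tuples that lie in the support of the $K$-fold relative product $\lambda=\mu\times_Z\cdots\times_Z\mu$; equivalently, the essential tuples with $\pi(x_1)=\cdots=\pi(x_K)$ that are generic for $\lambda$ (as in the structure theory behind \cite{LiuWangxu2024}). To put a sequence entropy tuple into the sensitivity classes I would invoke Theorem \ref{main_abel} to upgrade it to an IT tuple, extract an infinite independence set for the neighborhoods, and then feed this combinatorial independence --- realized on a positive-measure set of configurations --- into the ergodic theorem to produce a positive-density occurrence of the separating pattern along $\mathcal{F}$, hence sensitivity in the mean. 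For the reverse inclusion I would use that $(Z,\nu,G)$, having discrete spectrum, is $\mu$-mean equicontinuous, so any separation already visible in $Z$ is equicontinuous rather than sensitive; consequently all mean sensitivity must originate inside the fibers of $\pi$, which forces a mean sensitive tuple to have $\pi$-collapsed coordinates and to sit in the support of $\lambda$, i.e.\ to be a sequence entropy tuple.

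The step I expect to be the main obstacle is the amenable analogue of Garc\'{\i}a-Ramos's theorem that discrete spectrum is equivalent to $\mu$-mean equicontinuity, together with the localized form relating the fibers of $\pi$ to the absence of mean sensitivity. Over abelian groups this rests on controlling Besicovitch averages directly through eigenfunctions and group characters; for a general amenable $G$ one must instead recover the spectral information from the pointwise ergodic theorem along $\mathcal{F}$, and the interaction between the eigenvalue structure and F{\o}lner averaging is no longer transparent. A secondary difficulty is that Theorem \ref{main_abel} supplies only an infinite independence set, not one of positive density, so the passage to a positive-density separating pattern is not orbit-wise automatic and must be extracted measure-theoretically using ergodicity. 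Once the local mean-equicontinuity statement is secured, the inclusions established above close into a cycle and the three families coincide.
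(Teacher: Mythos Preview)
Your proposal has a genuine gap in the direction $\text{SE}_K^\mu \subset \text{MS}_K^\mu$, and the paper's argument is quite different from what you outline.

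You propose to invoke Theorem~\ref{main_abel} to upgrade a sequence entropy tuple to an IT tuple and then ``feed this combinatorial independence into the ergodic theorem.'' This does not work, and you partly recognize why: an IT tuple gives only an \emph{infinite} independence set, with no density information, so the pointwise ergodic theorem along $\mathcal{F}$ has nothing to bite on. More seriously, the mean sensitivity condition requires that for \emph{every} set $A$ with $\mu(A)>0$ one finds $y_1,\dots,y_K\in A$ with a uniform lower bound $\delta$ on the visit density to $U_1\times\cdots\times U_K$; the IT property is a purely topological statement about $X$ and gives no handle on an arbitrary positive-measure $A$, nor any uniformity in $\delta$ independent of $A$. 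The paper does \emph{not} use Theorem~\ref{main_abel} at all in proving Theorem~\ref{cor:se=sm}.

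What the paper does instead is a measure-theoretic construction that replaces the character machinery used in the abelian case. By Mackey's theorem (Proposition~\ref{vonNeumann}) the Kronecker factor is modeled as $Z/H$ for a compact group $Z$, and the ergodic decomposition of $\mu^{(K)}$ is explicitly parametrized by $\mathbf{z}\in Z^{(K)}$ (Lemma~\ref{lem-er-demco-Xn}), with components $\lambda_{\mathbf z}^X=\int \mu_{\tau(zz_1)}\times\cdots\times\mu_{\tau(zz_K)}\,d\nu_Z(z)$. Given the sequence entropy tuple one has $\lambda_K^X(U_1\times\cdots\times U_K)>0$ by Proposition~\ref{suppordisin}; Lemmas~\ref{lem-le0} and~\ref{lem-le01} translate this into $\nu_Z(\cap_k Z_{U_k}^b)>b$. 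The crucial step is then Lemma~\ref{ll}, which uses translation-invariance of Haar measure to choose $\mathbf z$ so that \emph{simultaneously} $\lambda_{\mathbf z}^X(U_1\times\cdots\times U_K)>b^{K+1}$ and $\lambda_{\mathbf z}^X(V^{(K)})>0$ for the given $V=A$. Since $\lambda_{\mathbf z}^X$ is ergodic, the pointwise ergodic theorem now applies to the diagonal action on $X^{(K)}$ and produces the required $y_1,\dots,y_K\in A$ with uniform density bound $b^{K+1}$. This simultaneous positivity is precisely what your IT approach cannot deliver.

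Two smaller points. Your direct equivalence of MS and SM tuples speaks of ``separation by at least $\delta$'' and ``Besicovitch average of the separation function,'' which are not the definitions in this paper; here both notions are phrased via visits to neighborhoods $U_k$, and the inclusion $\text{MS}_K^\mu\subset\text{SM}_K^\mu$ is immediate, so the paper simply closes the cycle $\text{SE}\subset\text{MS}\subset\text{SM}\subset\text{SE}$. For $\text{SM}_K^\mu\subset\text{SE}_K^\mu$ your idea is essentially correct: the paper makes it precise not via a global ``mean equicontinuity of $Z$'' statement but by noting that if the admissible partition $\beta$ lies in $\mathcal{K}_\mu(G)$ then each $1_{B_k}$ is almost periodic, and \cite[Lemma~2.5]{YZZ} gives the needed uniform equicontinuity of Birkhoff averages on a large compact set, yielding a contradiction.
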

	
	{\bf Structure of this paper:} In Section \ref{sec-pre}, we introduce fundamental concepts and preliminary results. Theorem \ref{main_abel} is proved in Section \ref{sec-it}, and Theorem \ref{thm-sequence-entropy} is established in Section \ref{sec:new}. The proof of Theorem \ref{cor:se=sm} is given in Section \ref{sec:proof_thm_se_ms}.


	\section{Preliminaries}\label{sec-pre}

	\subsection{F\o lner sequence}
	
	A countable group $G $  is \emph{amenable} \cite{F} if it possesses a \emph{F{\o}lner sequence}, which is a sequence of finite nonempty subsets $\mathcal{F} = \{F_n\}_{n=1}^{\infty} $  such that
	\[
	\lim_{n \to +\infty} \frac{|gF_n \Delta F_n|}{|F_n|} = 0 \quad \text{for every } g \in G,
	\]
	where $|A|$ is the number of elements in the set $A$.
	A F{\o}lner sequence $\mathcal{F}= \{F_n\}_{n=1}^{\infty} $  is said to be \emph{tempered} if there exists a constant $C > 0 $  such that
	\[
	|\cup_{k < n} F_k^{-1}F_n| \leq C |F_n| \quad \text{for all } n \in \mathbb{N}.
	\]
	From Lindenstrauss \cite{Lindenstrauss_Pointwise_amenable}, every F{\o}lner sequence has a tempered F{\o}lner subsequence, and the following pointwise ergodic theorem holds.
	\begin{thm}\label{tempered_pointwise_ergodic}
		Let $(X,\mu,G)$ be an mps and $\mathcal{F}=\{F_n \}_{n=1}^{\infty} $ be a tempered F{\o}lner sequence of $G$. Then for any $f \in L^1(X,\mu)$, there is a $G$-invariant $f^* \in L^1(X,\mu)$ such that 
		\begin{equation*}
			\lim_{n \to +\infty} \frac 1{|F_n|} \sum_{g\in F_n} f (gx) = f^* (x) \ \text{ for } \mu\text{-a.e. }x\in X.
		\end{equation*} 
		In particular, if $\mu$ is ergodic, one has
		\begin{equation*}
			\lim_{n \to +\infty} \frac 1{|F_n|} \sum_{g\in F_n} f (gx) = \int f d\mu  \ \text{ for } \mu\text{-a.e. }x\in X.
		\end{equation*} 
	\end{thm}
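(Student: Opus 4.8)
The plan is to follow Lindenstrauss's scheme: deduce the almost everywhere convergence from two ingredients --- convergence on a dense subclass of $L^1(X,\mu)$ together with a weak-type $(1,1)$ maximal inequality for the averaging operators --- and to obtain the maximal inequality from a probabilistic covering lemma for which the temperedness of $\mathcal{F}$ is exactly the hypothesis one needs. Throughout write $A_{F_n}f(x)=\frac{1}{|F_n|}\sum_{g\in F_n}f(gx)$ for the ergodic averages and $Mf(x)=\sup_n A_{F_n}|f|(x)$ for the associated maximal function. First I would record the $L^2$ mean ergodic theorem for F{\o}lner sequences: if $P$ is the orthogonal projection of $L^2(X,\mu)$ onto the closed subspace of $G$-invariant functions, then $A_{F_n}f\to Pf$ in $L^2$ for every $f\in L^2$. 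This is von Neumann's argument and uses only the F{\o}lner property through the orthogonal decomposition $L^2=\ker(I-P)\oplus\overline{B}$, where $B$ is the span of coboundaries $u-u\circ g$; on invariant functions the averages are constant, and on coboundaries they are controlled as in the next step. The limit $f^*$ is the conditional expectation $\E(f\mid\mathcal{I})$ onto the invariant $\sigma$-algebra $\mathcal{I}$, which is automatically $G$-invariant; in the ergodic case $\mathcal{I}$ is trivial and $f^*=\int f\,d\mu$.

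Next I would exhibit a subclass $V\subset L^1$ that is dense and on which a.e.\ (indeed uniform) convergence is elementary. Take $V$ to be the span of the bounded invariant functions together with the bounded coboundaries $u-u\circ g$ with $u\in L^\infty$ and $g\in G$. Invariant functions are fixed by each $A_{F_n}$, while for a bounded coboundary the F{\o}lner property gives
$$|A_{F_n}(u-u\circ g)(x)|\le \frac{|F_n\triangle gF_n|}{|F_n|}\,\|u\|_\infty\longrightarrow 0$$
uniformly in $x$. Since $V$ is dense in $L^2$ by the decomposition of the previous step, and $\|\cdot\|_1\le\|\cdot\|_2$ on a probability space, $V$ is dense in $L^1$ as well.

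The decisive and hardest step is the weak-type $(1,1)$ maximal inequality
$$\mu(\{x:Mf(x)>\lambda\})\le \frac{C'}{\lambda}\,\|f\|_1\qquad(f\in L^1,\ \lambda>0),$$
with $C'$ depending only on the temperedness constant $C$. I would derive this from a covering lemma in the spirit of Lindenstrauss: whenever one assigns to finitely many points $a$ a ``tile'' $F_{n(a)}a$, one can select a subfamily that covers a fixed proportion (of order $1/C$) of the union while keeping the overlap multiplicity integrable. The selection is random and greedy, processing the tiles from the largest scale downward and admitting each with a suitable probability; the temperedness hypothesis $|\bigcup_{k<n}F_k^{-1}F_n|\le C|F_n|$ is precisely the bound that controls the expected overlap of a tile with those already admitted. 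Transferring this covering lemma from the group to the action via Calder\'on transference and then running the usual Vitali-type argument --- summing $\int|f|$ over the selected, boundedly overlapping tiles against the threshold $\lambda$ --- yields the maximal inequality.

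Finally I would assemble the pieces. Set $D=\{f\in L^1:A_{F_n}f\text{ converges }\mu\text{-a.e.}\}$. For any $h\in D$ the oscillation $\Omega f:=\limsup_n A_{F_n}f-\liminf_n A_{F_n}f$ satisfies $\Omega f\le 2M(f-h)$, so the maximal inequality gives $\mu(\Omega f>\lambda)\le \frac{2C'}{\lambda}\|f-h\|_1$; letting $h$ range over a sequence from the dense class approaching $f$ shows $\Omega f=0$ a.e., whence $D$ is $\|\cdot\|_1$-closed. Since $D$ contains the dense class $V$, we get $D=L^1$, so $A_{F_n}f$ converges a.e.\ for every $f\in L^1$; identifying this a.e.\ limit with the $L^1$-limit supplied by the first step shows it equals the $G$-invariant $f^*=\E(f\mid\mathcal{I})$, and in the ergodic case $\int f\,d\mu$. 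I expect the covering lemma of the third step, and in particular the verification that temperedness bounds the overlaps, to be the main obstacle; the first, second and fourth steps are standard soft analysis once the maximal inequality is in hand.
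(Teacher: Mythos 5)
Your outline is correct and coincides with the proof in the source the paper relies on: the paper does not prove this theorem but quotes it directly from Lindenstrauss \cite{Lindenstrauss_Pointwise_amenable}, and your scheme (mean ergodic theorem plus the dense class of invariant functions and bounded coboundaries, a weak-type $(1,1)$ maximal inequality obtained from the randomized covering lemma in which temperedness bounds the expected overlaps, Calder\'on transference, and the Banach-principle closure argument) is precisely Lindenstrauss's original argument. The only caveat is that the covering lemma, which you rightly flag as the decisive step, is stated here as a plan rather than carried out, but as a reconstruction of the cited proof the proposal is faithful.
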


	\subsection{Factor map}
	Let $(X,G)$ and $(Y,G)$ be two tdss, and $\mu\in\mxg$. In this paper, we use the following  two types of factor maps:
	\begin{itemize}
		\item a \emph{topological factor map} $\pi:(X,G)\to (Y,G)$ is a continuous surjective map such that $g\circ\pi=\pi\circ g$ for any $g\in G$;
		\item a \emph{measure-theoretic factor map} $\pi:(X,\mu,G)\to (Y,\pi_*(\mu),G)$ is a measurable map such that for any $g\in G$, $g\circ\pi=\pi\circ g$, $\mu$-a.e., where  $\pi_*(\mu):=\mu\circ\pi^{-1}$.
	\end{itemize}	
	If no ambiguity arises in the following text, we will omit ``topological" or ``measure-theoretic" and refer to them simply as factor.
	
	\subsection{Metric on the space of probability measures}
	Let $ X $ be a compact metric space with Borel $\sigma$-algebra $ \mathcal{B}_X $. Let $ \mathcal{M}(X) $ be the space of Borel probability measures on $ X $. For $ \mu \in \mathcal{M}(X) $, denote by $ \mathcal{B}_X^\mu $ the completion of $ \mathcal{B}_X $ with respect to $ \mu $, i.e., the smallest $\sigma$-algebra containing $ \mathcal{B}_X $ and all $\mu$-null sets.
	If $\{f_n \}_{n=1}^\infty$ is a dense subset of $C(X)$, the set of continuous functions on $X$, then  
	\begin{equation}\label{metric_measure}
		\varrho(\mu, \nu) = \sum_{n=1}^\infty \frac{|\int f_nd\mu-\int f_nd\nu|}{2^n\|f_n\|_\infty},
	\end{equation}
	is a metric on $\mathcal{M}(X)$ giving the weak$^*$ topology, i.e., 
	\begin{equation}\label{eq123}
	\mu_k \stackrel{\varrho}{\to} \mu \quad \iff \quad \int_X f \, d\mu_k \to \int_X f \, d\mu \quad \forall f \in C(X).
\end{equation}
	Equipped with the weak$^*$ topology, $ \mathcal{M}(X) $ is a compact convex space.	 In the following, we will denote by $\supp(\rho)$ the topological support of $\rho\in\M(X)$.

		\begin{lem}\label{lem-1}Let $\rho\in\mathcal{M}(X)$, $x\in\supp(\rho)$, and let $U$ be an open neighborhood of $x$ in $X$. Then, there exists $\delta>0$ such that if $\rho'\in\mathcal{M}(X)$ satisfying $\varrho(\rho,\rho')<\delta$ then $\rho'(U)>0$. 
	\end{lem}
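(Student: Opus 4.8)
The plan is to exploit the fact that, since $x\in\supp(\rho)$, every neighborhood of $x$ carries positive $\rho$-mass, and to transfer this positivity to nearby measures by testing against a single well-chosen continuous function whose integral is controlled by the metric $\varrho$.

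First I would shrink the neighborhood: since $X$ is metric, choose an open set $V$ with $x\in V\subset\overline{V}\subset U$ (for instance a small metric ball). Because $x\in\supp(\rho)$, we have $c:=\rho(V)>0$. By Urysohn's lemma, pick $f\in C(X)$ with $0\le f\le 1$, $f\equiv 1$ on $V$ and $f\equiv 0$ off $U$; then $f\le\mathbbm{1}_U$ and $\int f\,d\rho\ge\rho(V)=c>0$, while for any $\rho'\in\mathcal{M}(X)$ one has $\rho'(U)\ge\int f\,d\rho'$. Thus it suffices to produce $\delta>0$ forcing $\int f\,d\rho'>0$ whenever $\varrho(\rho,\rho')<\delta$.

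Next comes the only genuinely technical point: the function $f$ need not belong to the dense family $\{f_n\}$ that defines $\varrho$ in \eqref{metric_measure}, so one cannot bound $|\int f\,d\rho-\int f\,d\rho'|$ directly by a single summand. I would fix this by density: choose $f_{n_0}$ with $\|f-f_{n_0}\|_\infty<c/4$, and split
$$\Big|\int f\,d\rho'-\int f\,d\rho\Big|\le \Big|\int (f-f_{n_0})\,d\rho'\Big|+\Big|\int f_{n_0}\,d\rho'-\int f_{n_0}\,d\rho\Big|+\Big|\int (f_{n_0}-f)\,d\rho\Big|.$$
The outer two terms are each at most $\|f-f_{n_0}\|_\infty<c/4$. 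For the middle term, note that $\frac{|\int f_{n_0}\,d\rho-\int f_{n_0}\,d\rho'|}{2^{n_0}\|f_{n_0}\|_\infty}$ is one summand of $\varrho(\rho,\rho')$, hence $|\int f_{n_0}\,d\rho-\int f_{n_0}\,d\rho'|\le 2^{n_0}\|f_{n_0}\|_\infty\,\varrho(\rho,\rho')$.

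Finally I would set $\delta:=\dfrac{c}{4\cdot 2^{n_0}\|f_{n_0}\|_\infty}$ (with $f_{n_0}\not\equiv 0$, which holds since $c>0$). Then $\varrho(\rho,\rho')<\delta$ yields $|\int f\,d\rho'-\int f\,d\rho|<3c/4$, so $\int f\,d\rho'>c-3c/4=c/4>0$ and therefore $\rho'(U)\ge\int f\,d\rho'>0$, as desired. The main obstacle is precisely the approximation step above; once one recognizes that closeness in $\varrho$ only directly controls test integrals against the $f_n$, the density of $\{f_n\}$ in $C(X)$ resolves it with no further difficulty.
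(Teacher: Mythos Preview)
Your proof is correct and follows essentially the same route as the paper's: both use Urysohn's lemma to produce a continuous $f$ supported in $U$ with $\int f\,d\rho>0$, then argue that $\int f\,d\rho'$ remains positive for $\rho'$ near $\rho$ in $\varrho$. The only difference is that the paper shortcuts your density/approximation step by directly invoking \eqref{eq123} (i.e., $\varrho$ metrizes the weak-$^*$ topology, so $\mu\mapsto\int f\,d\mu$ is $\varrho$-continuous for every $f\in C(X)$), whereas you explicitly unpack this continuity by approximating $f$ with some $f_{n_0}$ from the defining family.
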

	\begin{proof}
		Let $ x \in \operatorname{supp}(\rho) $. Then for any open neighborhood $U$ of $x$ in $X$,  $ \rho(U) > 0 $. Due to Urysohn's Lemma, there exists $f\in C(X)$ such that $\int fd\mu>\epsilon$ for some $\epsilon>0$, and $f|_{X\setminus U}=0$. By \eqref{eq123}, there exists $ \delta > 0 $ such that if $ \varrho(\rho, \rho') < \delta $, then 
		$$
		\int fd \rho'-\int fd\rho>\epsilon/2.
		$$
	Thus, $\int fd\rho'>0$. Since    $f|_{X\setminus U}=0$, it follows that $\rho'(U)>0$. 
	\end{proof}
	
	\subsection{Disintegration of probability measures}
	Let $X$ be a compact metric space and $\mu \in \mathcal{M}(X)$.
	For a sub-$\sigma$-algebra  $\mathcal{C}$ of  $\mathcal{B}_X^\mu$,
	$\mu$ can be disintegrated over $\mathcal{C}$ as $\mu = \int_X
	\mu_{\mathcal{C},x}d\mu(x)$ where $\mu_{\mathcal{C},x} \in \mathcal{M}(X)$ such that  for any $f \in L^1(X, \mu)$, $\int_Xfd\mu_{\mathcal{C},x}$ is $\mathcal{C}$-measurable.  Moreover, 
	\begin{align*}\mathbb{E}_\mu( f|\mathcal{C})(x)= \int_Xfd\mu_{\mathcal{C},x} \text{ for } \mu\text{-a.e. }x \in X,
	\end{align*}
	where $ \mathbb{E}_\mu( f|\mathcal{C})$ is the conditional expectation of $f$ over the $\sigma$-algebra $\mathcal{C}$ with respect to the measure $\mu$. 	Especially, for $ \mu\text{-a.e. }x \in X$, 
	\begin{align}\label{eq1-1}	\mu_{\mathcal{C},x}=	\mu_{\mathcal{C},x'}\text{ for $\mu_{\mathcal{C},x}$ a.e. }x'\in X.	\end{align}
	It is well-known that if $\mathcal{C}',\mathcal{C}$ are sub-$\sigma$-algebras of $\mathcal{B}_X^\mu$ with $\mathcal{C}'\subset \mathcal{C}$ then for every $f \in L^1(X,\mathcal{B}_X^\mu,\mu)$, $$\mathbb{E}_\mu( f|\mathcal{C}')(x)=\mathbb{E}_\mu(\mathbb{E}_\mu( f|\mathcal{C})|\mathcal{C}')(x) \text{ for } \mu\text{-a.e. }x \in X.$$
	Thus, $\text{ for all }f\in L^1(X, \mu)$,
	\begin{align*}
		\int_Xfd\mu_{\mathcal{C}',x}= \int_X \int_Xf d\mu_{\mathcal{C},y} d\mu_{\mathcal{C}',x}(y)\text{ for } \mu\text{-a.e. }x \in X.
	\end{align*}
	This implies
	\begin{align}\label{eq1}
		\mu_{\mathcal{C}',x}=  \int_X\mu_{\mathcal{C},y} d\mu_{\mathcal{C}',x}(y)\text{ for } \mu\text{-a.e. }x \in X.
	\end{align}
	
	For a  measure-theoretic factor map $\pi:(X,\mu,G)\to (Y,\pi_*(\mu),G)$, there is a natural disintegration of $\mu=\int_{Y}\mu_yd\nu(y)$, named by the disintegration of $\mu$ over $\pi$, such that 
	$$\mu_{\pi(x)}=\pi_*(\mu_{\pi^{-1}(\mathcal{B}_Y^{\nu}),x})\text{ for } \mu\text{-a.e. }x \in X.$$ We will proceed according to which of the two representations of measure disintegration is more convenient and flexible to work with.

Ergodic decomposition is a standard approach to disintegrating invariant measures. Specifically,  using Choquet representation theorem, for each $\mu \in \mathcal{M}(X,G)$ there is a unique measure $\tau$ on the Borel subsets of the  compact space $\mathcal{M}(X,G)$ such that $\tau (\mathcal{M}^e(X,G)) =1$ and $\mu = \int_{\mathcal{M}^e(X,G)} m \mathrm{d} \tau (m)$, which is called the \emph{ergodic decomposition} of $\mu$.
If we define $$\mathcal{I}_\mu(G)=\{B\in\mathcal{B}_X^\mu:\mu(gB\Delta B)=0,\forall g\in G\},$$
	he ergodic decomposition of $\mu$ is  precisely  the  disintegration of $\mu$ over $\mathcal{I}_\mu(G)$.
	
	\subsection{Kronecker  factor}
	Let $(X,\mu,G)$ be  an mps. Denote $\mathcal{H}= L^2(X,\mathcal{B}_X^\mu,\mu)$, and define
	\[
	\mathcal{H}_c = \left\{\varphi \in \mathcal{H} : \overline{\{U_g \varphi : g \in G\}} \text{ is compact in } \mathcal{H}\right\},
	\]
	where $U_g \varphi(x) = \varphi(gx)$ for $g \in G$ and $x\in X$. According to \cite[Theorem 7.1]{Zim76}, there exists a $G $-invariant sub-$ \sigma $-algebra $\mathcal{K}_\mu(G) $  of $\mathcal{B}^\mu_X $, which is called the \emph{Kronecker algebra} of $(X,\mu,G)$, such that $\mathcal{H}_c = L^2(X, \mathcal{K}_\mu(G), \mu) $. If  $ \pi: (X, \mu, G) \to (Y, \nu, G) $  is a factor map with  $ \pi^{-1}(\mathcal{B}_Y^{\nu}) = \mathcal{K}_\mu(G) $, then  $ (Y, \nu, G) $  is referred to as the \emph{Kronecker factor} of  $ (X, \mu, G)$. 
	The following lemma, adapted from \cite[Theorem 9.21]{Glasner_Eliv2003}, provides the structure of the Kronecker algebra for product systems.
	\begin{lem}\label{n_kronecker}Let   $(X, \mu, G)$ be an mps. Then, $\mathcal{K}_{\mu^{(n)}}(G)= (\mathcal{K}_{\mu}(G))^{(n)}$ for $n\in\N$.
	\end{lem}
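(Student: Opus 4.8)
The plan is to realize the lemma as a statement about the almost periodic (compact) vectors of a tensor product representation. Writing $\mathcal{H}=L^2(X,\mathcal{B}_X^\mu,\mu)$ and $\mathcal{H}_c=L^2(X,\mathcal{K}_\mu(G),\mu)$, I would first use the canonical unitary identification $L^2(X^{(n)},\mu^{(n)})\cong\mathcal{H}^{\otimes n}$, under which the Koopman operator of the diagonal $G$-action becomes $U_g^{\otimes n}$, together with the identification $L^2(X^{(n)},(\mathcal{K}_\mu(G))^{(n)},\mu^{(n)})\cong\mathcal{H}_c^{\otimes n}$ (the $L^2$-space of a product $\sigma$-algebra is the Hilbert tensor product of the factor $L^2$-spaces). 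Since the Kronecker algebra is exactly the $\sigma$-algebra whose $L^2$-space is the space of compact vectors, the lemma becomes the spectral identity $(\mathcal{H}^{\otimes n})_c=\mathcal{H}_c^{\otimes n}$, where $(\cdot)_c$ denotes the subspace of vectors with relatively norm-compact $G$-orbit. By associativity of the tensor product it suffices to treat $n=2$ and induct, but the argument runs uniformly in $n$.

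For the inclusion $\mathcal{H}_c^{\otimes n}\subseteq(\mathcal{H}^{\otimes n})_c$ I would argue directly: if $\varphi_1,\dots,\varphi_n\in\mathcal{H}_c$, then each orbit closure $\overline{\{U_g\varphi_i:g\in G\}}$ is norm-compact, and the $n$-linear tensor map is jointly continuous on products of bounded sets (by a telescoping estimate), so the $G$-orbit of $\varphi_1\otimes\cdots\otimes\varphi_n$ lies in the compact image of $\prod_i\overline{\{U_g\varphi_i:g\in G\}}$ and hence has compact closure. Taking closed linear spans and using that $(\mathcal{H}^{\otimes n})_c$ is a closed subspace yields the inclusion.

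The substance of the proof is the reverse inclusion, which I expect to be the main obstacle. I would set $\mathcal{H}_{wm}:=\mathcal{H}_c^{\perp}$ and expand $\mathcal{H}^{\otimes n}=(\mathcal{H}_c\oplus\mathcal{H}_{wm})^{\otimes n}$ into its $2^n$ tensor summands indexed by subsets $S\subseteq\{1,\dots,n\}$, with $\mathcal{H}_c$ in the coordinates of $S$ and $\mathcal{H}_{wm}$ elsewhere; the summand $S=\{1,\dots,n\}$ is $\mathcal{H}_c^{\otimes n}$, and I must show that the orthogonal complement $M$, the sum of all summands containing at least one $\mathcal{H}_{wm}$-factor, carries no nonzero compact vector of the product. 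Here I would invoke the Jacobs--de Leeuw--Glicksberg description underlying the Kronecker factor: a unitary representation splits orthogonally into its compact part and its flight part, a vector being a \emph{flight vector} precisely when $0$ lies in the weak closure of its $G$-orbit, and for unitary representations the compact part coincides with $\mathcal{H}_c$ and the flight part with $\mathcal{H}_{wm}$. The key computation is that tensoring with a flight vector produces a flight vector: if $\psi\in\mathcal{H}_{wm}$, choose a net $(g_\alpha)$ with $U_{g_\alpha}\psi\to0$ weakly; against any elementary tensor the inner product factorizes, so $U_{g_\alpha}^{\otimes n}(\eta_1\otimes\cdots\otimes\psi\otimes\cdots\otimes\eta_n)\to0$ weakly on the total set of elementary tensors, and by uniform boundedness it tends to $0$ weakly in $\mathcal{H}^{\otimes n}$. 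Since the flight part is a closed subspace and every summand in $M$ is spanned by elementary tensors with at least one flight coordinate, this gives $M\subseteq(\mathcal{H}^{\otimes n})_{wm}$.

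Finally I would combine the two inclusions with the orthogonal splitting $\mathcal{H}^{\otimes n}=(\mathcal{H}^{\otimes n})_c\oplus(\mathcal{H}^{\otimes n})_{wm}$: since $\mathcal{H}_c^{\otimes n}\subseteq(\mathcal{H}^{\otimes n})_c$ and $M\subseteq(\mathcal{H}^{\otimes n})_{wm}$, while $\mathcal{H}_c^{\otimes n}\oplus M=\mathcal{H}^{\otimes n}=(\mathcal{H}^{\otimes n})_c\oplus(\mathcal{H}^{\otimes n})_{wm}$ are two orthogonal decompositions with matching inclusions, a short linear-algebra argument forces $(\mathcal{H}^{\otimes n})_c=\mathcal{H}_c^{\otimes n}$. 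Translating back through the $L^2$-identifications gives $\mathcal{K}_{\mu^{(n)}}(G)=(\mathcal{K}_\mu(G))^{(n)}$. The delicate points to pin down are that the compact/weakly mixing dichotomy for $U_g$ is exactly the JdLG compact/flight dichotomy (so that $\mathcal{H}_{wm}$ really is the flight part, where the weak-orbit-closure criterion applies) and the measure-theoretic identification of the $L^2$-space of a product $\sigma$-algebra with the Hilbert tensor product; both are standard and are the content of \cite[Theorem 9.21]{Glasner_Eliv2003}.
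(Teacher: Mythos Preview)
The paper does not supply its own proof of this lemma; it merely records the statement as adapted from \cite[Theorem 9.21]{Glasner_Eliv2003}. Your argument via the Jacobs--de Leeuw--Glicksberg splitting is correct and is precisely the standard route to this identity (and essentially the argument underlying the cited reference): the inclusion $\mathcal{H}_c^{\otimes n}\subseteq(\mathcal{H}^{\otimes n})_c$ follows from joint norm-continuity of the tensor map on products of bounded sets, while the reverse inclusion follows because tensoring with a flight vector produces a flight vector, which forces the orthogonal complement of $\mathcal{H}_c^{\otimes n}$ into $(\mathcal{H}^{\otimes n})_{wm}$ and hence, by comparing the two orthogonal decompositions, yields equality.
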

	An mps $(X,\mu, G)$ is said to have  \emph{discrete spectrum} if $\mathcal{K}_\mu(G) = \mathcal{B}^\mu_X $. It is well known that if $G$ is abelian, then an ergodic mps $(X,\mu,G)$ with discrete spectrum is isomorphic to a rotation on a compact abelian group \cite{Halmos_von_Neumann1942}. In the case where $G$ is non-abelian, the structure becomes somewhat more intricate. However, we still have the following characterization  \cite[Theorem 1]{Mackey_George1964}.
	\begin{prop}\label{vonNeumann}
		An ergodic mps $(Y,\nu, G)$  with discrete spectrum  is isomorphic to an mps  $(Z/H, \nu_{Z/H}, G)$, where
		\begin{itemize}
			\item[(1)]$Z$ is a compact metrizable topological group, and $H$ is a closed subgroup of $Z$;
			\item[(2)] $G$ acts on $Z/H$ via a continuous group homomorphism $\theta: G \to Z$, with a dense image $\Gamma:= \theta(G)$ and quotient map $\tau: Z \to Z/H$;
			\item[(3)] $\nu_{Z/H}$ is the Haar measure on $Z/H$ induced by the quotient map $\tau$, such that $\nu_{Z/H} = \nu_Z \circ \tau^{-1}$, where $\nu_Z$ is the Haar measure on $Z$.
		\end{itemize}
	\end{prop}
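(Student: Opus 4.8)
The plan is to reconstruct the compact group $Z$ directly from the Koopman representation and then realize $Y$ as one of its homogeneous spaces. Write $\mathcal{H}=L^2(Y,\nu)$ and let $U_g\varphi=\varphi\circ g$ be the Koopman operators. The discrete spectrum hypothesis $\mathcal{K}_\nu(G)=\mathcal{B}_Y^\nu$ means $\mathcal{H}=\mathcal{H}_c$, so every $\varphi$ has precompact $G$-orbit; by the standard decomposition of such almost periodic unitary representations, $\mathcal{H}$ is the Hilbert direct sum $\bigoplus_i V_i$ of finite-dimensional $G$-invariant irreducible subspaces $V_i$, the index set being countable since $\mathcal{H}$ is separable.

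First I would build the group. On each $V_i$ the operators $U_g$ act as unitaries, giving a homomorphism $G\to\prod_i\mathcal{U}(V_i)$, and I would let $Z$ be the closure of its image in the product topology. As each $\mathcal{U}(V_i)$ is a compact metrizable group and a countable product of such is again compact metrizable, $Z$ is a compact metrizable topological group; setting $\theta(g)=(U_g|_{V_i})_i$ yields a continuous homomorphism $\theta:G\to Z$ with dense image $\Gamma=\theta(G)$, establishing item (1) and the first half of (2). By continuity every $z\in Z$ determines a unitary operator on $\mathcal{H}$ extending the $U_g$, and this $Z$-action on $\mathcal{H}$ is the central object.

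Next I would pass to the point level. Let $B$ be the linear span of $\bigcup_i V_i$. It is a $G$-invariant $*$-subalgebra of $L^\infty(Y,\nu)$ (closed under products because tensor products of finite-dimensional representations remain finite-dimensional) and it is dense in $\mathcal{H}$. Replacing $Y$ by the Gelfand spectrum of the uniform closure $\overline{B}$, I may assume $\overline{B}=C(Y)$ with $Y$ compact metric; then each $U_g$ is an algebra automorphism of $C(Y)$, hence so is each $z\in Z$ by density, so $Z$ acts on $Y$ by homeomorphisms, continuously and extending the $G$-action through $\theta$. Ergodicity implies that for $\nu$-a.e. $y_0$ the $G$-orbit is dense in $Y$; fixing such a $y_0$, the set $Zy_0$ is closed (as $Z$ is compact) and contains $\overline{Gy_0}=Y$, so $Z$ acts transitively. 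Letting $H=\{z\in Z:zy_0=y_0\}$, a closed subgroup, and $\tau:Z\to Z/H$ the quotient, the orbit map $zH\mapsto zy_0$ is a continuous bijection from the compact space $Z/H$ to the Hausdorff space $Y$, hence a homeomorphism, giving the identification $Y\cong Z/H$ of item (2).

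Finally, for (3), I would note that $\nu$ is $\Gamma$-invariant by construction and therefore $Z$-invariant by density of $\Gamma$ and weak$^*$ continuity of $z\mapsto z_*\nu$; since the orbit map is $Z$-equivariant and the unique $Z$-invariant probability measure on the homogeneous space $Z/H$ is the image $\nu_Z\circ\tau^{-1}$ of normalized Haar measure $\nu_Z$, the measure $\nu$ is identified with $\nu_{Z/H}=\nu_Z\circ\tau^{-1}$. The main obstacle I anticipate is the passage from the Hilbert-space action of $Z$ to a genuine continuous action on the point space $Y$: one must choose the topological model so that $\overline{B}=C(Y)$ and verify that the limiting operators in $Z$ really act as algebra automorphisms rather than merely bounded operators, and that this action respects $\nu$. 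The second delicate point is deducing honest topological transitivity of the $Z$-action from ergodicity, which is why I work on the support of $\nu$, where the homogeneous structure is genuinely topological.
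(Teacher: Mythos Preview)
The paper does not prove this proposition; it is quoted directly from Mackey \cite[Theorem 1]{Mackey_George1964} and used as a black box. Your outline is essentially the classical Mackey--Zimmer argument and is correct in its main lines: decomposing $L^2$ into finite-dimensional irreducibles, compactifying $G$ inside $\prod_i\mathcal{U}(V_i)$, and then passing to a topological model via the Gelfand spectrum of the algebra generated by the $V_i$.

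Two small remarks on points you flagged as delicate. First, your claim that $\nu$-a.e.\ $y_0$ has dense $G$-orbit does hold for arbitrary countable $G$ once $\nu$ has full support in the Gelfand model: for each basic open $U_n$ the $G$-saturation $\bigcup_{g}g^{-1}U_n$ is open, invariant and of positive measure, hence of full measure by ergodicity; intersecting over $n$ gives the conclusion without any pointwise ergodic theorem. Second, full support of $\nu$ on the Gelfand spectrum is automatic, since a nonzero $f\in C(Y')\cong\overline{B}\subset L^\infty(\nu)$ with $\int|f|^2\,d\nu=0$ would be zero in $L^\infty(\nu)$ and hence zero in $\overline{B}$. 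With these two points filled in, the transitivity argument goes through exactly as you wrote, and the identification $Y\cong Z/H$ with $\nu=\nu_Z\circ\tau^{-1}$ follows from uniqueness of the $Z$-invariant probability on a homogeneous space.
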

	The following lemma plays an important role in the proof of
	Theorem
	\ref{cor:se=sm}, which can be proved by the invariance of the Haar measure.
	\begin{lem}\label{ll}
		Let $Z$ be a compact metrizable topological group, and $\nu_Z$ be the  Haar measure of $Z$. Assume that $Z_1,\ldots,Z_M$ are measurable subsets of $Z$ with $$\nu_Z(\cap_{m=1}^M Z_m)>\kappa.$$ Then there exists an open neighborhood $V$ of $e_Z$ such that	for any  $z_1,\ldots,z_M\in V$.
		$$\nu_Z(\cap_{m=1}^M(Z_mz_m^{-1}))>\kappa,$$
		where $e_Z$ is the identity of $Z$.
	\end{lem}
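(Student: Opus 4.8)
The plan is to reduce the statement to the $L^1$-continuity of translation for the Haar measure, combined with an elementary union bound comparing intersections. Write $A=\cap_{m=1}^M Z_m$ and set $\epsilon=\nu_Z(A)-\kappa>0$; since $\nu_Z(A)>\kappa$, it will suffice to show that replacing each $Z_m$ by a slightly right-translated copy $Z_m z_m^{-1}$ changes the measure of the intersection by strictly less than $\epsilon$, provided each $z_m$ is close enough to $e_Z$.

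First I would record the elementary set-theoretic inequality
\[
\nu_Z\Big(\cap_{m=1}^M Z_m\Big)-\nu_Z\Big(\cap_{m=1}^M Z_m z_m^{-1}\Big)\le \sum_{m=1}^M \nu_Z\big(Z_m\,\Delta\,(Z_m z_m^{-1})\big),
\]
which follows from the observation that any point lying in $\cap_m Z_m$ but not in $\cap_m Z_m z_m^{-1}$ must lie in $Z_m\setminus(Z_m z_m^{-1})$ for some $m$, so that $\big(\cap_m Z_m\big)\setminus\big(\cap_m Z_m z_m^{-1}\big)\subset\cup_m\big(Z_m\setminus(Z_m z_m^{-1})\big)$. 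Hence it is enough to make each term $\nu_Z(Z_m\Delta(Z_m z_m^{-1}))$ smaller than $\epsilon/M$ by choosing $z_m$ near $e_Z$.

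The heart of the matter is therefore the continuity of right translation in the $L^1$-norm: for a fixed measurable set $B\subset Z$, the quantity $\nu_Z\big(B\,\Delta\,(Bz^{-1})\big)=\|\mathbf{1}_B(\cdot\,z)-\mathbf{1}_B\|_{L^1(\nu_Z)}$ tends to $0$ as $z\to e_Z$. I would prove this using the invariance of the Haar measure, which is where the hypotheses on $Z$ enter. For a continuous $f$, uniform continuity of $f$ on the compact group $Z$ gives $\|f(\cdot\,z)-f\|_\infty\to0$ as $z\to e_Z$, hence $\|f(\cdot\,z)-f\|_{L^1(\nu_Z)}\to0$; for a general $f\in L^1(\nu_Z)$ one approximates by continuous functions $g$ and uses the right-invariance of $\nu_Z$ to control the error, via $\|f(\cdot\,z)-g(\cdot\,z)\|_{L^1(\nu_Z)}=\|f-g\|_{L^1(\nu_Z)}$, so that $\limsup_{z\to e_Z}\|f(\cdot\,z)-f\|_{L^1(\nu_Z)}\le 2\|f-g\|_{L^1(\nu_Z)}$ can be made arbitrarily small. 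Applying this with $f=\mathbf{1}_{Z_m}$ yields, for each $m$, an open neighborhood $V_m$ of $e_Z$ with $\nu_Z(Z_m\,\Delta\,(Z_m z^{-1}))<\epsilon/M$ whenever $z\in V_m$.

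Finally I would set $V=\cap_{m=1}^M V_m$, an open neighborhood of $e_Z$. For any $z_1,\dots,z_M\in V$ the displayed inequality gives
\[
\nu_Z\Big(\cap_{m=1}^M Z_m z_m^{-1}\Big)\ge \nu_Z(A)-\sum_{m=1}^M \nu_Z\big(Z_m\,\Delta\,(Z_m z_m^{-1})\big)>\nu_Z(A)-\epsilon=\kappa,
\]
where the strictness comes from the strict bound $<\epsilon/M$ on each summand. The only nonroutine ingredient is the $L^1$-continuity of translation, and I expect that to be the main (though standard) obstacle; note that the argument uses nothing beyond the compactness of $Z$ and the invariance of $\nu_Z$, so the amenability and temperedness appearing elsewhere in the paper play no role in this lemma.
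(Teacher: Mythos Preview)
Your proof is correct and follows precisely the approach the paper indicates: the paper does not give a detailed argument for this lemma, stating only that it ``can be proved by the invariance of the Haar measure,'' and your reduction to the $L^1$-continuity of right translation (established via approximation by continuous functions and the right-invariance of $\nu_Z$ on the compact group $Z$) is the standard way to unpack that hint. The union bound and the choice $V=\cap_{m=1}^M V_m$ are exactly what is needed, and your observation that compactness and Haar invariance are the only ingredients is on point.
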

	
	\subsection{Sequence entropy}Let $X$ be a compact metric space. A partition of $X$ is a collection of subsets of $X$ such that the subsets are pairwise disjoint and their union has full measure. Denote by $\mathcal{P}_X$ the set of all finite  partitions of $X$.
	For $\mu\in\mathcal{M}(X)$, define
	$$\mathcal{P}_X^\mu=\{
	\alpha\in \mathcal{P}_X : \text{each element in }\alpha \text{ belongs to }\mathcal{B}_X^\mu
	\}.$$
	Given $\alpha\in\mathcal{P}_X^\mu$
	and a sub-$\sigma$-algebra $\mathcal{D}$ of $\mathcal{B}_X^\mu$, define
	$$H_\mu(\alpha|\mathcal{D}) = \sum_{A\in\alpha}\int_X-\mathbb{E}_\mu(1_A|\mathcal{D})\log \mathbb{E}_\mu(1_A|\mathcal{D})d\mu.$$
	One standard fact is that
	$H_\mu(\alpha|\mathcal{D})$ increases with respect to $\alpha$ and decreases with respect to $\mathcal{D}$. Set $\mathcal{N}=\{\emptyset,X \}$ and define
	$$H_\mu(\alpha)=H_\mu(\alpha|\mathcal{N})=-\sum_{A\in \alpha}\mu(A)\log \mu(A).$$

	Let $(X,\mu, G)$ be an mps and $\mathcal{A}=\{g_i\}_{i\in \N}$ be a sequence in $G$. The sequence entropy of $\alpha$ with respect to $(X, \mu, G)$ along $\mathcal{A}$ is 
	$$h_{\mu}^\mathcal{A}(G,\alpha)=\limsup_{n\to\infty}\frac{1}{n}H_\mu( \vee_{i=1}^{n}g_i^{-1}\alpha).$$  The supremun sequence entropy of $\alpha$ with respect to $(X, \mu, G)$ is  
	$$h_{\mu}^\mathcal{*}(G,\alpha)=\sup_{\mathcal{A}}h_{\mu}^\mathcal{A}(G,\alpha),$$
	where the supremun is taken over all sequences of $G$.
	Define the  supremum of sequence entropy of $(X,\mu,G)$ by 
	$$h_{\mu}^*(G)=\sup_{\alpha\in\mathcal{P}_X^\mu}h_{\mu}^*(G,\alpha).$$
	The following result establishes the relation between measure-theoretic sequence entropy and Kronecker algebra, which was proved in \cite{LiuWangxu2024} using filters and in \cite{KH09} via a combinatorial method (see \cite{Huang_Sequenceentropypairs} and \cite{LY} for the abelian and amenable cases). We note that all the proofs above are essentially the same as the proof in \cite{Huang_Sequenceentropypairs}.
	\begin{thm}\label{thm-se-enytopy}
		Let $(X,\mu, G)$ be an mps, and $\alpha\in\mathcal{P}_X^\mu$. Then $$h^*_\mu(G,\alpha)=H_\mu(\alpha|\mathcal{K}_\mu(G)).$$
	\end{thm}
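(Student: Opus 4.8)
The plan is to prove the identity by establishing the two inequalities $h_\mu^*(G,\alpha)\le H_\mu(\alpha\mid\mathcal{K})$ and $h_\mu^*(G,\alpha)\ge H_\mu(\alpha\mid\mathcal{K})$ separately, abbreviating $\mathcal{K}=\mathcal{K}_\mu(G)$. The two structural facts about $\mathcal{K}$ that drive everything are that it is $G$-invariant, so that $H_\mu(g^{-1}\alpha\mid\mathcal{K})=H_\mu(\alpha\mid\mathcal{K})$ for every $g\in G$ by measure preservation, and that $L^2(X,\mathcal{K},\mu)=\mathcal{H}_c$ consists exactly of the functions with precompact $G$-orbit. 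I would also record at the outset the standard approximation $H_\mu(\alpha\mid\mathcal{K})=\inf_P H_\mu(\alpha\mid P)$, the infimum running over finite $\mathcal{K}$-measurable partitions $P$; this is valid because $\mathcal{K}$ is countably generated modulo null sets and conditional entropy is continuous along increasing $\sigma$-algebras.

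For the upper bound, fix a sequence $\mathcal{A}=\{g_i\}_{i\in\N}$ and $\epsilon>0$, and choose a finite $\mathcal{K}$-measurable partition $P$ with $H_\mu(\alpha\mid P)\le H_\mu(\alpha\mid\mathcal{K})+\epsilon$. Splitting the join and using subadditivity and monotonicity of conditional entropy together with measure preservation gives
\[ H_\mu\Big(\bigvee_{i=1}^n g_i^{-1}\alpha\Big)\le H_\mu\Big(\bigvee_{i=1}^n g_i^{-1}P\Big)+\sum_{i=1}^n H_\mu\big(g_i^{-1}\alpha\mid g_i^{-1}P\big)=H_\mu\Big(\bigvee_{i=1}^n g_i^{-1}P\Big)+nH_\mu(\alpha\mid P). \]
Hence the whole matter reduces to showing that a $\mathcal{K}$-measurable (i.e.\ discrete-spectrum) partition $P=\{P_1,\dots,P_r\}$ has zero sequence entropy: $\tfrac1n H_\mu(\bigvee_{i=1}^n g_i^{-1}P)\to0$. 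This I would prove using precompactness of the orbit of $(1_{P_1},\dots,1_{P_r})$ in $L^1(\mu)^r$: given $\delta>0$, cover the orbit by finitely many $\delta$-balls centred at translates $h_1^{-1}P,\dots,h_s^{-1}P$, sort the indices $i$ into classes $I_1,\dots,I_s$ according to which centre approximates $g_i^{-1}P$, and bound $H_\mu(\bigvee_{i\in I_l}g_i^{-1}P)\le \log r+|I_l|\,c(\delta)$ via the Rokhlin inequality $H_\mu(Q'\mid Q)\le c(\delta)$ for $r$-atom partitions that are $\delta$-close in the partition metric. Summing over $l$ gives $H_\mu(\bigvee_{i=1}^n g_i^{-1}P)\le s\log r+n\,c(\delta)$, so $\limsup_n \tfrac1n H_\mu(\bigvee g_i^{-1}P)\le c(\delta)\to0$. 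Combined with the display this yields $h_\mu^{\mathcal A}(G,\alpha)\le H_\mu(\alpha\mid P)\le H_\mu(\alpha\mid\mathcal K)+\epsilon$; letting $\epsilon\to0$ and taking the supremum over $\mathcal A$ proves the upper bound. Note that this direction needs only precompactness, not amenability.

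For the lower bound I would construct a single sequence $\mathcal{A}=\{g_i\}$ realizing the value. By the chain rule, $H_\mu(\bigvee_{i=1}^n g_i^{-1}\alpha)\ge H_\mu(\bigvee_{i=1}^n g_i^{-1}\alpha\mid\mathcal K)=\sum_{i=1}^n H_\mu\big(g_i^{-1}\alpha\mid\bigvee_{j<i}g_j^{-1}\alpha\vee\mathcal K\big)$, so it suffices to pick the $g_i$ inductively so that $g_i^{-1}\alpha$ is asymptotically conditionally independent of the finite partition $\bigvee_{j<i}g_j^{-1}\alpha$ over $\mathcal K$; each summand is then at least $H_\mu(\alpha\mid\mathcal K)-\epsilon_i$ with $\epsilon_i\to0$, forcing $h_\mu^{\mathcal A}(G,\alpha)\ge H_\mu(\alpha\mid\mathcal K)$. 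Writing $f_A=1_A-\mathbb{E}_\mu(1_A\mid\mathcal K)\in\mathcal H_c^{\perp}$ for $A\in\alpha$ and using $G$-invariance of $\mathcal K$, the defect from conditional independence of $g^{-1}\alpha$ and a past atom $C$ is exactly $\mathbb{E}_\mu(1_{g^{-1}A}1_C\mid\mathcal K)-\mathbb{E}_\mu(1_{g^{-1}A}\mid\mathcal K)\,\mathbb{E}_\mu(1_C\mid\mathcal K)=\mathbb{E}_\mu\big(U_g f_A\cdot 1_C\mid\mathcal K\big)$, so the inductive step requires a $g$ making the conditional covariances $\big\|\mathbb{E}_\mu(U_g f_A\cdot 1_C\mid\mathcal K)\big\|_1$ small simultaneously over the finitely many atoms $A\in\alpha$, $C\in\bigvee_{j<i}g_j^{-1}\alpha$; continuity of conditional entropy then turns this into the desired entropy lower bound.

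The main obstacle is precisely the existence, at each stage, of such a $g$: the claim that for every finite partition $\beta$ and $\epsilon>0$ there are infinitely many $g$ for which $g^{-1}\alpha$ is $\epsilon$-conditionally independent of $\beta$ over $\mathcal K$. One cannot settle this by a density/mean-ergodic argument as in the unconditioned weakly mixing case, because the extension of $(X,\mu,G)$ over its Kronecker factor need not be relatively weakly mixing (already for distal systems that are not of discrete spectrum, such as nontrivial nilsystems); there the relevant relative-product averages do not vanish in density, so the decorrelating $g$'s must be chosen along a sparse set adapted to the continuous (non-atomic) maximal spectral type carried by $\mathcal H_c^{\perp}$. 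This selection is what is carried out through measure-theoretic independence sets in \cite{KH09} and via a filter argument in \cite{LiuWangxu2024}; once it is available, the inductive construction together with the upper bound yields $h_\mu^*(G,\alpha)=H_\mu(\alpha\mid\mathcal K)$.
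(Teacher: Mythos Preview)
The paper does not prove this theorem itself; it merely cites \cite{LiuWangxu2024}, \cite{KH09}, \cite{Huang_Sequenceentropypairs}, and \cite{LY}, noting that all these proofs are essentially the one in \cite{Huang_Sequenceentropypairs}. Your outline follows exactly that template: the upper bound via precompactness of orbits of $\mathcal{K}$-measurable indicator functions is complete and correct, and for the lower bound you correctly isolate the inductive scheme and its crux---the existence, at each stage, of a $g$ making $g^{-1}\alpha$ nearly conditionally independent of the past over $\mathcal{K}$---while explicitly deferring that existence step to the very references the paper invokes. So your proposal is not self-contained for the lower bound (as you acknowledge), but since the paper only quotes the result there is no in-paper proof to compare against, and your sketch is faithful to the cited literature.
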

	
	A measurable partition $\{ A_1,\ldots,A_L\}\in\Pu$ is said to be  \emph{admissible} with respect to a $K$-tuple $(x_1,\ldots,x_K)\in X^{(K)}$ if for each $1\le l\le L$, there exists $1 \le k_l\le n$ such that $x_{k_l}\notin \overline{A_l}$, where  $\overline{A}$ is the closure of the subset $A$ in $X$.  Denote $\Delta_K(X):=\{(x,\ldots,x)\in X^{(K)}:x\in X\}$ for $K\ge 2$.
	We  say a tuple $(x_1,\dots, x_K) \in X^{(K)}\setminus \Delta_K(X)$ is a \emph{$\mu$-sequence entropy $K$-tuple} if, for any   admissible partition $\alpha\in\Pu$ with respect to $(x_1,\ldots,x_K)$, $h_\mu^*(G,\alpha)>0$. Denote by $SE_K^\mu(X, G)$ the set of all $\mu$-sequence entropy $K$-tuples.
	
 Let $\mu=\int_X\mu_xd\mu(x)$ be the disintegration of $\mu$ over $\mathcal{K}_\mu(G)$.
			For $K\in\mathbb{N}$ with  $K\ge 2$, define a measure on $X^{(K)}$ via
	\begin{align}\label{eq-1213}
		\lambda_K^X=\int_X\mu_x \times \ldots \times \mu_x \, d\mu(x).
	\end{align}
	
Now we establish the relation between measure-theoretic sequence entropy tuple and the topological support of $\l_K^X$, following the approach for $\mathbb{Z}$-actions  in \cite[Theorem 3.4]{Huang_Sequenceentropypairs}. Thus, the proof of Proposition \ref{suppordisin} is provided in  Appendix.
	\begin{prop}\label{suppordisin}Let $(X,G)$ be a tds and $\mu\in\mathcal{M}(X,G)$ For $K\ge 2$,
		$$
		SE_K^{\mu}(X, G) = \operatorname{supp}(\lambda_K^X )\setminus \Delta_K(X).
		$$
	\end{prop}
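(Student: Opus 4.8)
The plan is to prove the two inclusions separately, using Theorem \ref{thm-se-enytopy} to convert the dynamical condition $h_\mu^*(G,\alpha)>0$ into the static condition $H_\mu(\alpha\mid\mathcal{K}_\mu(G))>0$, and then to detect positivity of this conditional entropy directly from the fibre measures $\mu_x$ of the disintegration $\mu=\int_X\mu_x\,d\mu(x)$ over $\mathcal{K}_\mu(G)$. First I would record two elementary facts used throughout: since $\mathbb{E}_\mu(1_A\mid\mathcal{K}_\mu(G))(x)=\mu_x(A)$ and $-t\log t\ge 0$ on $[0,1]$ with equality only at $t\in\{0,1\}$, one has $H_\mu(\alpha\mid\mathcal{K}_\mu(G))=0$ exactly when $\mu_x(A)\in\{0,1\}$ for $\mu$-a.e.\ $x$ and every $A\in\alpha$; and the map $x\mapsto\mu_x(U)$ is $\mathcal{K}_\mu(G)$-measurable for each Borel $U\subseteq X$.

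For $\operatorname{supp}(\lambda_K^X)\setminus\Delta_K(X)\subseteq SE_K^\mu(X,G)$ I would argue contrapositively. If an off-diagonal tuple $(x_1,\dots,x_K)$ fails to be a sequence entropy tuple, there is an admissible $\alpha=\{A_1,\dots,A_L\}$ with $H_\mu(\alpha\mid\mathcal{K}_\mu(G))=0$. By the first fact, for a.e.\ $x$ the fibre $\mu_x$ is concentrated on a single atom $A_{l(x)}$, so $\operatorname{supp}(\mu_x)\subseteq\overline{A_{l(x)}}$ and $\operatorname{supp}(\mu_x\times\ldots\times\mu_x)\subseteq(\overline{A_{l(x)}})^{(K)}$ (the $K$-fold product); integrating over $\mu$ shows $\lambda_K^X$ is concentrated on the closed set $\bigcup_{l=1}^{L}(\overline{A_l})^{(K)}$, hence $\operatorname{supp}(\lambda_K^X)\subseteq\bigcup_l(\overline{A_l})^{(K)}$. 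Were the tuple in $\operatorname{supp}(\lambda_K^X)$, it would lie in some $(\overline{A_l})^{(K)}$, giving $x_i\in\overline{A_l}$ for all $i$ and contradicting admissibility, which supplies $k_l$ with $x_{k_l}\notin\overline{A_l}$.

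For the reverse inclusion $SE_K^\mu(X,G)\subseteq\operatorname{supp}(\lambda_K^X)\setminus\Delta_K(X)$ I would again argue contrapositively, manufacturing an admissible partition of zero conditional entropy from a tuple outside $\operatorname{supp}(\lambda_K^X)$. Choosing open $U_i\ni x_i$ with $\lambda_K^X(U_1\times\cdots\times U_K)=\int_X\prod_i\mu_x(U_i)\,d\mu(x)=0$, the $\mathcal{K}_\mu(G)$-measurable sets $X_i:=\{x:\mu_x(U_i)=0\}$ cover $X$ modulo $\mu$; disjointifying them yields a $\mathcal{K}_\mu(G)$-partition $\{Y_i\}$ with $Y_i\subseteq X_i$. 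The key computation $\mu(Y_i\cap U_i)=\int_{Y_i}\mu_x(U_i)\,d\mu(x)=0$ shows $Y_i\subseteq X\setminus U_i$ modulo $\mu$, so the sets $A_i:=Y_i\setminus U_i$ agree with $Y_i$ modulo $\mu$, form a partition $\alpha\in\mathcal{P}_X^\mu$ (its union has full measure, which is all the definition requires), and satisfy $\overline{A_i}\subseteq X\setminus U_i$, whence $x_i\notin\overline{A_i}$ and $\alpha$ is admissible. Because $A_i=Y_i$ modulo $\mu$ and $Y_i\in\mathcal{K}_\mu(G)$, the first fact yields $H_\mu(\alpha\mid\mathcal{K}_\mu(G))=0$, so $h_\mu^*(G,\alpha)=0$ by Theorem \ref{thm-se-enytopy} and the tuple is not a sequence entropy tuple.

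I expect the main obstacle to be this construction in the reverse inclusion: upgrading the purely measure-theoretic datum ``$\mu_x(U_i)=0$ on a $\mathcal{K}_\mu(G)$-set'' to a \emph{genuinely} admissible partition, whose atoms must have closures missing the prescribed points while still having zero conditional entropy. The decisive observation is that a $\mathcal{K}_\mu(G)$-atom $Y_i$ on which $\mu_x(U_i)=0$ already lies in the closed set $X\setminus U_i$ modulo $\mu$, so intersecting with that closed set is a null modification that pins down the closure (hence admissibility) without changing any conditional expectation (hence the entropy). The minor bookkeeping — coincidences among the $x_i$ producing empty atoms, and discarding a leftover null set — is absorbed by the paper's convention that a partition need only have full-measure union.
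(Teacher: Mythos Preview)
Your proof is correct and follows the same overall strategy as the paper: both directions are handled contrapositively, converting $h_\mu^*(G,\alpha)=0$ to $H_\mu(\alpha\mid\mathcal{K}_\mu(G))=0$ via Theorem~\ref{thm-se-enytopy} and then reading this condition off the fibre measures $\mu_x$.

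There are two minor differences in execution worth noting. For the inclusion $\operatorname{supp}(\lambda_K^X)\setminus\Delta_K(X)\subseteq SE_K^\mu(X,G)$, the paper first refines the given admissible $\alpha$ to a $K$-atom partition $\beta=\{B_1,\dots,B_K\}$ with $x_k\notin\overline{B_k}$, invokes a separate lemma characterising $h_\mu^*(G,\beta)=0$ by pairwise disjointness of $\{x:\mu_x(B_k)>0\}$, and then performs an inclusion--exclusion computation to obtain $\lambda_K^X\big(\prod_k(X\setminus\overline{B_k})\big)=0$. Your argument bypasses all of this by observing directly that each $\mu_x$ is supported in a single $\overline{A_{l(x)}}$, so $\operatorname{supp}(\lambda_K^X)\subset\bigcup_l(\overline{A_l})^{(K)}$; this is cleaner. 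For the reverse inclusion, the paper builds its admissible partition from the sets $D_k=U_k\cup\{x:\mu_x(U_k)>0\}$ (taking the atoms of the algebra they generate, minus the empty intersection), whereas you disjointify the complementary sets $X_i=\{x:\mu_x(U_i)=0\}$ and trim by $X\setminus U_i$; the two constructions are essentially dual and yield admissible $\mathcal{K}_\mu(G)$-partitions by the same mechanism.
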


	\subsection{Mean sensitivity}
	If we further assume that $G$ is amenable, we can introduce, similarly to the case of $G=\mathbb{Z}$ (as in \cite{LYY22}), the notions of mean sensitive tuples and  sensitive in the mean tuples.  Let $\mathcal{F}=\{ F_n\}_{n\ge 1}$ be a F{\o}lner sequence of $G$. We  say that the $K$-tuple $(x_1,\dots, x_K) \in X^{(K)}\setminus \Delta_K(X)$ is
	\begin{itemize}
		\item[(1)]a \textbf{$\mu$-mean sensitive $K$-tuple along $\mathcal{F}$}  if for any open neighborhoods $U_k$ of $x_k$ with $k=1,\dots, K$, there exists $\delta > 0$ such that for any $A \in \mathcal{B}_X^\mu$ with $\mu(A) > 0$, there are $y_1, y_2, \dots, y_K\in A$ such that 
		$$\limsup_{n\to\infty}\frac{|\{ g\in F_n:gy_k\in U_k,k=1,\dots,K \}|}{|F_n|}>\delta;$$
		\item[(2)]a \textbf{$\mu$-sensitive in the mean $K$-tuple along $\mathcal{F}$}  if for any open neighborhoods $U_k$ of $x_k$ with $k=1,\dots, K$, there exists $\delta > 0$ such that for any $A \in \mathcal{B}_X^\mu$ with $\mu(A) > 0$, there exists $n\in \mathbb{N}$ and $y_1^n, y_2^n, \dots, y_K^n \in A$  such that
		$$
		\frac{|\{g \in F_n : g y_k^n \in U_k, k = 1, 2, \dots,K\}|}{|F_n|} > \delta.
		$$
	\end{itemize}
	We denote the sets of all $\mu$-mean sensitive $K$-tuples along $\mathcal{F}$ and  $\mu$-sensitive
	in the mean $K$-tuples along $\mathcal{F}$ by $\text{MS}_K^\mu(X,\mathcal{F})$ and  $\text{SM}_K^\mu(X,\mathcal{F})$, respectively.

	\section{Proof of Theorem \ref{main_abel}}\label{sec-it}
	In this section, we prove Theorems \ref{main_abel}, which follows as a corollary of Theorem \ref{thm:IT_support}. In the first subsection, we introduce the necessary notation, state Theorem \ref{thm:IT_support} and establish Theorems \ref{main_abel}, assuming Theorem \ref{thm:IT_support} holds. The next subsection is devoted to the proof of Theorem \ref{thm:IT_support}.

	\subsection{Statement of Theorem \ref{thm:IT_support} and derivation of Theorem  \ref{main_abel}  from  Theorem \ref{thm:IT_support}}
	Let $(X,G)$ be a tds and $\mu \in \mathcal{M}(X,G)$. Let $\mu = \int_X \mu_x \, d\mu(x)$ be the disintegration of $\mu$  over $\mathcal{K}_\mu(G)$.
	\begin{lem}\label{lem-11}For $n\in\N$, let $\mu^{(n)} = \int_{X^{(n)}} \rho_{\mathbf{x}} \, d\mu^{(n)}(\mathbf{x})$ be the ergodic decomposition of $\mu^{(n)}$. Then 		for $\mu^{(n)}$-a.e. $\mathbf{x} \in  X^{(n)}$,
		\begin{equation}\label{disinteg_Kron_n}
			\rho_{\mathbf{x}}^{(n)} = \int_{X^{(n)}} \mu_{x_1} \times \cdots \times \mu_{x_n} \, d\rho_{\mathbf{x}}^{(n)}(x_1,\ldots,x_n).
		\end{equation}
	
	\end{lem}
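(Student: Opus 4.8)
The plan is to obtain \eqref{disinteg_Kron_n} as an instance of the tower formula \eqref{eq1} for disintegrations, applied on the product system $(X^{(n)},\mu^{(n)},G)$ to the two nested sub-$\sigma$-algebras $\mathcal{I}_{\mu^{(n)}}(G)\subset\mathcal{K}_{\mu^{(n)}}(G)$. I would first verify this inclusion: any $G$-invariant $\varphi\in L^2(X^{(n)},\mu^{(n)})$ has orbit closure $\overline{\{U_g\varphi:g\in G\}}=\{\varphi\}$, which is trivially compact, so $\varphi\in\mathcal{H}_c=L^2(X^{(n)},\mathcal{K}_{\mu^{(n)}}(G),\mu^{(n)})$; hence, modulo $\mu^{(n)}$-null sets, $\mathcal{I}_{\mu^{(n)}}(G)\subset\mathcal{K}_{\mu^{(n)}}(G)$, which is exactly the containment \eqref{eq1} requires.

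Next, recalling that the ergodic decomposition of $\mu^{(n)}$ is precisely its disintegration over $\mathcal{I}_{\mu^{(n)}}(G)$, I identify $\rho_{\mathbf{x}}=(\mu^{(n)})_{\mathcal{I}_{\mu^{(n)}}(G),\mathbf{x}}$ for $\mu^{(n)}$-a.e. $\mathbf{x}$. Applying \eqref{eq1} with $\mathcal{C}'=\mathcal{I}_{\mu^{(n)}}(G)$ and $\mathcal{C}=\mathcal{K}_{\mu^{(n)}}(G)$ to the measure $\mu^{(n)}$ on $X^{(n)}$ then yields
\begin{equation*}
\rho_{\mathbf{x}}=\int_{X^{(n)}}(\mu^{(n)})_{\mathcal{K}_{\mu^{(n)}}(G),\mathbf{y}}\,d\rho_{\mathbf{x}}(\mathbf{y})\qquad\text{for }\mu^{(n)}\text{-a.e. }\mathbf{x}\in X^{(n)}.
\end{equation*}

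The key step, which I expect to be the main obstacle, is to identify the inner disintegration with a product of conditional measures, namely $(\mu^{(n)})_{\mathcal{K}_{\mu^{(n)}}(G),\mathbf{y}}=\mu_{y_1}\times\cdots\times\mu_{y_n}$ for $\mu^{(n)}$-a.e. $\mathbf{y}=(y_1,\ldots,y_n)$. By Lemma \ref{n_kronecker} we have $\mathcal{K}_{\mu^{(n)}}(G)=(\mathcal{K}_\mu(G))^{(n)}$, so I would show that $\mathbf{y}\mapsto\mu_{y_1}\times\cdots\times\mu_{y_n}$ is a valid disintegration of $\mu^{(n)}$ over $(\mathcal{K}_\mu(G))^{(n)}$ and then invoke the a.e.-uniqueness of disintegrations. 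Concretely, for a product function $f=f_1\otimes\cdots\otimes f_n$ the map $\mathbf{y}\mapsto\int f\,d(\mu_{y_1}\times\cdots\times\mu_{y_n})=\prod_{i=1}^n\mathbb{E}_\mu(f_i|\mathcal{K}_\mu(G))(y_i)$ is $(\mathcal{K}_\mu(G))^{(n)}$-measurable, and since the coordinate algebras are independent under the product measure it equals $\mathbb{E}_{\mu^{(n)}}(f|(\mathcal{K}_\mu(G))^{(n)})(\mathbf{y})$; as the linear span of such products is dense in $L^1(X^{(n)},\mu^{(n)})$ and conditional expectation is $L^1$-continuous, this determines the disintegration uniquely. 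Substituting this product formula into the displayed self-consistency relation and relabelling $\mathbf{y}$ as $(x_1,\ldots,x_n)$ produces \eqref{disinteg_Kron_n}. Apart from this factorization-and-uniqueness argument, everything reduces to the already-stated tower property \eqref{eq1} together with $\mathcal{I}_{\mu^{(n)}}(G)\subset\mathcal{K}_{\mu^{(n)}}(G)$.
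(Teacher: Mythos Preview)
Your proposal is correct and follows essentially the same route as the paper's proof: invoke Lemma~\ref{n_kronecker} to get $\mathcal{K}_{\mu^{(n)}}(G)=(\mathcal{K}_\mu(G))^{(n)}$, use $\mathcal{I}_{\mu^{(n)}}(G)\subset\mathcal{K}_{\mu^{(n)}}(G)$, and apply the tower formula \eqref{eq1}. The paper compresses this into one sentence, while you spell out the two implicit steps (why the inclusion of $\sigma$-algebras holds, and why the disintegration of $\mu^{(n)}$ over the product Kronecker algebra factors as $\mu_{y_1}\times\cdots\times\mu_{y_n}$), but the logical skeleton is identical.
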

	\begin{proof}By Lemma \ref{n_kronecker}, one has $\mathcal{K}_{\mu^{(n)}}(G)= (\mathcal{K}_{\mu}(G))^{(n)}$. Therefore, Lemma \ref{lem-11} follows directly from \eqref{eq1} and the fact that  $\mathcal{I}_{\mu^{(n)}}(G)\subset\mathcal{K}_{\mu^{(n)}}(G)$.
	\end{proof}

	Define the space
	$$
	\mathcal{X}_\mu = \left\{ \rho \in \mathcal{M}(X) : \mu\left( \{ x : \varrho(\rho, \mu_x) < \epsilon \} \right) > 0 \ \text{ for any }  \epsilon > 0 \right\},
	$$
	where $\varrho$ is the metric on $\mathcal{M}(X)$ defined as in \eqref{metric_measure}. The following lemma shows that $\mathcal{X}_\mu$ contains almost all measures arising from the disintegration over the Kronecker algebra.
	\begin{lem}\label{lem-full} 
		Let $R_\mu := \{ x \in X : \mu_x \in \mathcal{X}_\mu \}$. Then $\mu(R_\mu) = 1$.
	\end{lem}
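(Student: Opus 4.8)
The plan is to identify $\mathcal{X}_\mu$ as the topological support of a suitable pushforward measure on $\mathcal{M}(X)$, and then to invoke the elementary fact that the support of a Borel probability measure on a separable metric space carries full mass. The only conceptual content lies in this identification; everything else is measurability bookkeeping.

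First I would introduce the $\mu$-a.e.\ defined assignment $\Phi : X \to \mathcal{M}(X)$, $\Phi(x) = \mu_x$, and check that it is measurable. This is routine: for each $f \in C(X)$ the defining property of the disintegration over $\mathcal{K}_\mu(G)$ gives $\int_X f \, d\mu_x = \mathbb{E}_\mu(f \mid \mathcal{K}_\mu(G))(x)$, which is $\mathcal{B}_X^\mu$-measurable in $x$; since the Borel $\sigma$-algebra of the compact metrizable space $\mathcal{M}(X)$ (with the weak$^*$ topology) is generated by the evaluation maps $\rho \mapsto \int f \, d\rho$ with $f \in C(X)$, the map $\Phi$ is $\mathcal{B}_X^\mu$-measurable. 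In particular, for each fixed $\rho$ the function $x \mapsto \varrho(\rho, \mu_x)$ is measurable, so the set appearing in the definition of $\mathcal{X}_\mu$ is measurable and the condition makes sense.

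Next I would push $\mu$ forward, setting $\nu = \Phi_*\mu \in \mathcal{M}(\mathcal{M}(X))$, and unwind the definitions. Writing $B(\rho,\epsilon) = \{\rho' \in \mathcal{M}(X) : \varrho(\rho,\rho') < \epsilon\}$ for the $\varrho$-ball and using the symmetry of $\varrho$, for every $\rho \in \mathcal{M}(X)$ and $\epsilon > 0$ one has
\[
\mu\big(\{ x : \varrho(\rho, \mu_x) < \epsilon \}\big) = \mu\big(\Phi^{-1}(B(\rho,\epsilon))\big) = \nu\big(B(\rho,\epsilon)\big).
\]
Hence $\rho \in \mathcal{X}_\mu$ if and only if $\nu(B(\rho,\epsilon)) > 0$ for every $\epsilon > 0$, which is exactly the statement $\rho \in \supp(\nu)$. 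Therefore $\mathcal{X}_\mu = \supp(\nu)$, and consequently $R_\mu = \Phi^{-1}(\supp(\nu))$, so that $\mu(R_\mu) = \nu(\supp(\nu))$.

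Finally I would conclude with the standard support lemma: since $\mathcal{M}(X)$ is compact metrizable, hence second countable, the open complement $\mathcal{M}(X) \setminus \supp(\nu)$ is a countable union of basic open sets each of $\nu$-measure zero, so $\nu(\mathcal{M}(X) \setminus \supp(\nu)) = 0$ and $\nu(\supp(\nu)) = 1$. This gives $\mu(R_\mu) = 1$, as claimed. I do not expect a genuine obstacle here; the one step worth stating carefully is the identity $\mathcal{X}_\mu = \supp(\Phi_*\mu)$, after which the result reduces to the classical fact that a measure is concentrated on its support.
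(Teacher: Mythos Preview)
Your proof is correct and takes a genuinely different route from the paper. The paper argues via Lusin's theorem: for each $\epsilon>0$ it produces a closed set $X_\epsilon$ of $\mu$-measure at least $1-\epsilon$ on which $x\mapsto\mu_x$ is continuous and which equals the support of $\mu|_{X_\epsilon}$; continuity plus being in the support then forces $X_\epsilon\subset R_\mu$. Your argument instead identifies $\mathcal{X}_\mu$ globally as $\supp(\Phi_*\mu)$ and reduces the claim to the classical fact that a Borel probability measure on a second-countable space is concentrated on its support. Your approach is slightly more conceptual and yields the extra information that $\mathcal{X}_\mu$ is exactly the support of the pushforward (hence closed), which the paper's Lusin argument does not make explicit; the paper's argument, on the other hand, avoids introducing the pushforward and stays entirely on $X$. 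Both are short and standard.
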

	\begin{proof}
		Since the map $x \mapsto \mu_{x}$ is measurable, Lusin's theorem ensures that for any $\epsilon > 0$, there exists a closed set $X_\epsilon \subseteq X$ with $\mu(X_\epsilon) > 1 - \epsilon$, on which $x \mapsto \mu_{x}$ is continuous and $X_\epsilon = \operatorname{supp}(\mu|_{X_\epsilon})$. It is easy to see that $X_\epsilon \subseteq R_\mu$. Since $\epsilon>0$ is arbitrary, it follows that $\mu(R_\mu) = 1$.
	\end{proof}
	
	We say that a subset $E$ of $X$ is an {\it IT set }if  each tuple of $E$ is an IT tuple.	Now we statement our main result in this section.
	\begin{thm}\label{thm:IT_support}
		For every  $\rho \in \mathcal{X}_\mu$,  $\mathrm{supp}(\rho)$ is an IT set.
	\end{thm}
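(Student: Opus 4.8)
The plan is to fix an arbitrary $K$-tuple $(x_1,\dots,x_K)$ with each $x_k\in\supp(\rho)$ together with open neighborhoods $U_k\ni x_k$, and to produce an infinite independence set for $(U_1,\dots,U_K)$. My first step is to convert the hypothesis $\rho\in\mathcal{X}_\mu$ into a \emph{uniform} lower bound on the conditional measures. Exactly as in the proof of Lemma~\ref{lem-1}, I would use Urysohn's lemma to pick, for each $k$, a function $f_k\in C(X)$ with $0\le f_k\le 1$, $\supp(f_k)\subset U_k$ and $\int f_k\,d\rho>0$. Then there are $\delta>0$ and $c>0$ such that the set $B=\{x\in X:\varrho(\rho,\mu_x)<\delta\}$ satisfies $\mu(B)>0$ and $\mu_x(U_k)\ge\int f_k\,d\mu_x>c$ for every $x\in B$ and every $k$. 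Since $x\mapsto\mu_x$ is $\mathcal{K}_\mu(G)$-measurable, $B$ is, mod $\mu$, a union of Kronecker fibers.

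Next I would pass to the Kronecker factor $\pi\colon(X,\mu,G)\to(Y,\nu,G)$, writing $\mu_x=\mu_{\pi(x)}$ and using the equivariance $g_*\mu_x=\mu_{gx}$ (valid because $\mathcal{K}_\mu(G)$ is $G$-invariant); in particular $\mu_y(g^{-1}U_k)=\mu_{gy}(U_k)$. Performing an ergodic decomposition — and using that the Kronecker disintegration refines it — I may assume $\mu$ is ergodic, choosing an ergodic component that still charges $B$, so that Proposition~\ref{vonNeumann} models $(Y,\nu,G)$ as a rotation on $Z/H$ by the dense subgroup $\Gamma=\theta(G)$, with $\nu$ the Haar measure. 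Setting $B_Y=\pi(B)$, I then have $\nu(B_Y)>0$ and $\mu_y(U_k)>c$ for $\nu$-a.e. $y\in B_Y$ and all $k$.

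The construction of the independence set then splits into a base part and a fiber part. For the base part I would exploit the homogeneity of $Z/H$: choosing group elements $g$ whose rotations $\theta(g)$ cluster near $e_Z$ and applying Lemma~\ref{ll} to lifts of $B_Y$ in $Z$, any finite family of translates $g^{-1}B_Y$ has intersection of $\nu$-measure bounded below, so there is a positive-measure set of base points $y$ with $gy\in B_Y$ — hence $\mu_y(g^{-1}U_k)>c$ — simultaneously for all the chosen $g$. Over such coordinated base points I must realize every coloring: for finite $I$ and $\sigma\colon I\to\{1,\dots,K\}$ I need $\mu\big(\cap_{g\in I}g^{-1}U_{\sigma(g)}\big)>0$. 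My plan is to build the index set inductively, maintaining at each stage a nested family of positive-measure pattern sets and using the rigidity of the rotation base to guarantee that each stage admits infinitely many admissible extensions; a compactness/diagonal argument, extracting a single sequence $g_1,g_2,\dots$ along which $\theta(g_n)$ converges in $Z$ and all finite patterns persist, then yields one infinite independence set.

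The main obstacle is precisely this last point: passing from arbitrarily large \emph{finite} independence sets to a single \emph{infinite} one. These are genuinely different (arbitrarily large finite independence sets are the weaker, sequence-entropy-type phenomenon, whereas an IT tuple demands an infinite one), so positive density of finite independence sets alone does not suffice. The leverage I intend to use is the rigidity of the equicontinuous base: unlike a general system, the rotation on $Z/H$ has a recurrent, homogeneous set of admissible return times, which keeps the tree of partial independence sets from terminating and lets a K\"onig's-lemma/compactness argument produce an infinite branch. Controlling the fiber correlations along these return times — ensuring the marginal bounds $\mu_y(g^{-1}U_k)>c$ upgrade to positive measure of the joint pattern sets, uniformly enough to survive the limit — is the delicate step, and it is where the interplay between the uniform fiber mass $c$ and the Haar-invariance of the base (Lemma~\ref{ll}) does the real work.
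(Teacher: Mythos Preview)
There is a genuine gap at exactly the point you yourself flag as delicate. Knowing that $\mu_y(g^{-1}U_k)>c$ for each $k$ and each $g$ in your chosen set of return times does not, by itself, give $\mu\big(\bigcap_{g\in I}g^{-1}U_{\sigma(g)}\big)>0$: the events $g^{-1}U_{\sigma(g)}$ are not conditionally independent over the Kronecker factor, and neither rigidity of the base nor Lemma~\ref{ll} addresses their fiber correlations. Lemma~\ref{ll} only controls intersections of \emph{translates} of sets inside the group $Z$; it lets you intersect base sets like $B_Y$, not fiber events of the form $g^{-1}U_k$. Consequently the inductive step --- producing a single $g_{n+1}$ such that every previously built pattern set $W_\sigma$ splits into $K$ pieces $W_\sigma\cap g_{n+1}^{-1}U_k$ of positive measure --- is not established by your argument, and without it the K\"onig-type extraction has nothing to extract from. (Incidentally, once that inductive step \emph{is} secured, no compactness argument is needed: iterating it directly produces an infinite independence set, so the finite-versus-infinite worry disappears.)

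The paper bypasses the fiber-correlation problem by passing to product systems. At the inductive step, with open sets $W_1,\dots,W_M$ each meeting a fixed compact $L$ (on which $\mu_x(U_k)>0$ and $\supp(\mu|_L)=L$) in positive measure, one looks at the set $L_1^{(K)}\times\cdots\times L_M^{(K)}$ in $X^{(MK)}$, where $L_m=W_m\cap L$. By Lemma~\ref{lem-11}, every ergodic component $\rho$ of $\mu^{(MK)}$ disintegrates as $\rho=\int \mu_{x_{1,1}}\times\cdots\times\mu_{x_{M,K}}\,d\rho$, so any component charging $L_1^{(K)}\times\cdots\times L_M^{(K)}$ automatically charges $(U_1\times\cdots\times U_K)^{(M)}$. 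Ergodicity of that single component then yields one $g_*$ with $\rho\big((L_1^{(K)}\times\cdots\times L_M^{(K)})\cap g_*^{-1}(U_1\times\cdots\times U_K)^{(M)}\big)>0$, which unpacks to $L_m\cap g_*^{-1}U_k\neq\emptyset$ for all $m,k$; the support condition on $L$ upgrades this to positive $\mu$-measure. No reduction to ergodic $\mu$, no $Z/H$ model, and no return-time analysis is used --- the product trick plus ergodicity of a component does all the work.
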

We prove Theorem \ref{main_abel}, assuming Theorem \ref{thm:IT_support} holds.
	\begin{proof}[Proof of Theorem \ref{main_abel}]
	Given $(x_1, \dots, x_K)\in SE_K^\mu(X, G)$, we only need to prove that for any open neighborhood $U_1\times\cdots\times U_K$ of $(x_1,\ldots,x_K)$, $(U_1,\ldots, U_K)$ admits an infinite independence set.
		
		Now fix $(x_1, \dots, x_K)\in SE_K^\mu(X, G)$  and an  open neighborhood $U_1\times\cdots\times U_K$ of $(x_1,\ldots,x_K)$. 	By Proposition~\ref{suppordisin}, we have $\lambda_K^X( \prod_{k=1}^K U_k) > 0$, which implies
		$$
		\mu(\cap_{k=1}^K \{x \in X : \mu_x(U_k) > 0\}) > 0.
		$$
	 By Lemma~\ref{lem-full}, we have
		$$
		\mu(R_\mu \cap \cap_{k=1}^K \{x \in X : \mu_x(U_k) > 0\}) > 0.
		$$
		By taking $z\in R_\mu \cap \cap_{k=1}^K \{x \in X : \mu_x(U_k) > 0\}$, we have
		$$\text{supp}(\mu_{z})\cap U_k\neq\emptyset,\text{ for }k=1,\ldots,K\text{ and }\mu_{z}\in\mathcal{X}_\mu.$$
By Theorem \ref{thm:IT_support}, $\text{supp}(\mu_{z})$ is an IT set, and we can choose $x_k'\in \text{supp}(\mu_{z})\cap U_k$ for $k=1,\ldots,K.$ Thus, $(x_1',\cdots,x_K')$ is an  IT tuple, and so $(U_1,\ldots,U_K)$ admits an infinite independence set. By the arbitrariness of $U_1\times\cdots\times U_K$, the proof is complete.
	\end{proof}

\subsection{Proof of  Theorem~\ref{thm:IT_support}}
	\begin{proof}[Proof of Theorem~\ref{thm:IT_support}]
		
		Given  $\xi\in \mathcal{X}_\mu$ and $x_1,\ldots,x_K\in\text{supp}(\xi)$, we are going to show that $(x_1,\cdots,x_K)$ is an IT tuple, which implies that  $\text{supp}(\xi)$ is an IT set by the arbitrariness of $(x_1,\cdots,x_K)$. 
		
	Recall $$\mu = \int_X \mu_x \, d\mu(x)$$ is the disintegration of $\mu$ over $\mathcal{K}_\mu(G)$. Given open neighborhoods $U_k$ of $x_k$ for $k=1,\ldots,K$, 
	by Lemma \ref{lem-1} and Lemma \ref{lem-full}, the set
		\begin{align*}
			\tilde L:=\{x\in X: \mu_{ {x}}(U_k)>0,\text{ for }k=1,\ldots,K \}
		\end{align*}
		has positive measure.  Let $L$ be a compact subset of $\tilde L$ with $\mu(L)>0$. We can assume, w.l.o.g., that 
		\begin{align}\label{eq-22} \text{supp}(\mu|_L)=L.
		\end{align}
		\begin{cl}\label{cl-1}If open subsets $W_1,\ldots,W_M$ of $X$ satisfies $\mu(W_m\cap L)>0$ for $m=1,\ldots,M$, then for any finite subset  $F\subset G$,  there exist open  subsets $W_{m,k}$ of $X$ and $g_*\in G\setminus F$ such that  for $m=1,\ldots,M,k=1,\ldots,K$,
			\begin{itemize}
				\item[(1)] $\mu(W_{m,k}\cap L)>0$,
				\item[(2)]  $g_*W_{m,k}\subset U_k$,
				\item[(3)]$W_{m,k}\subset W_m$.
			\end{itemize}
		\end{cl}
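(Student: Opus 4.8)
The plan is to separate the three requirements into one genuine positivity statement and a routine part, and then to locate the single group element $g_*$ using the recurrence furnished by the Kronecker algebra.

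For the routine part I would simply set $W_{m,k}:=W_m\cap g_*^{-1}(U_k)$. These are open, and they automatically satisfy conditions (3) and (2): indeed $W_{m,k}\subset W_m$, and $g_*W_{m,k}=g_*W_m\cap U_k\subset U_k$. Hence only (1) carries content, and the whole claim reduces to producing one $g_*\in G\setminus F$ with $\mu\bigl(W_m\cap g_*^{-1}(U_k)\cap L\bigr)>0$ for all $m,k$. Here I would exploit $\supp(\mu|_L)=L$ from \eqref{eq-22}: since $W_m\cap g_*^{-1}(U_k)$ is open, its intersection with $L$ has positive $\mu$-measure if and only if it is nonempty. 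Thus the target becomes purely topological: find a single $g_*\notin F$ such that, for every $m$, the image $g_*(L\cap W_m)$ meets each of $U_1,\dots,U_K$ (equivalently, for each $m,k$ a point $z\in L\cap W_m$ with $g_*z\in U_k$).

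For the choice of $g_*$ the main tool is that the disintegration $\mu=\int_X\mu_x\,d\mu(x)$ is taken over $\mathcal K_\mu(G)$. Consequently every conditional expectation $\mathbb E_\mu(f\mid\mathcal K_\mu(G))$ lies in the compact part $\mathcal H_c$ of $L^2$, i.e. has precompact $G$-orbit and is Bohr almost periodic, so for any finite family of test functions and any $\epsilon>0$ the set of $\epsilon$-return times is syndetic; in particular it meets $G\setminus F$, and this uses no amenability of $G$. Applying this to a finite family of Urysohn functions adapted to $U_1,\dots,U_K$ yields $g_*\notin F$ for which $\mu_{g_*x}$ is weak$^*$-close to $\mu_x$ for most $x$. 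Combined with $x\in L\subset\tilde L$ — where, because $\mu_x$ is $\varrho$-close to $\xi$ and $x_k\in\supp(\xi)$, one even gets a uniform bound $\mu_x(U_k)\ge\eta>0$ — and with the genericity relation \eqref{eq1-1} that $\mu_x$-a.e. point $x'$ satisfies $\mu_{x'}=\mu_x$, this controls the conditional masses of the pullbacks $g_*^{-1}(U_k)$: a mass computation along such recurrent $g_*$ gives a positive-measure set of $x$ whose fibre simultaneously charges the source patch $W_m\cap L$ and the pullback $g_*^{-1}(U_k)$.

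The hard part will be to upgrade ``$\mu_x$ charges both $W_m\cap L$ and $g_*^{-1}(U_k)$'' into ``$\mu_x\bigl(W_m\cap L\cap g_*^{-1}(U_k)\bigr)>0$'', i.e. to force the source patch and the pulled-back target to actually overlap on a common fibre: two subsets of a fibre of positive conditional measure need not meet, so a mere mass bound is insufficient, and for an isometric extension with small patches this overlap genuinely fails for a fixed recurrent $g_*$. The resolution must use that the relevant conditional measures are non-atomic (equivalently, that we are dealing with true sequence-entropy-type tuples; the isometric/atomic alternative is vacuous, since there $\tilde L$ forces $x_1=\dots=x_K$). The mechanism I would implement is a ``sweeping'' one: choose $g_*$ of large index, so that although $g_*$ acts near-identically on the Kronecker base it moves fibres by a large amount, whereby the positive base-spread of $W_m\cap L$ is amplified into a fibrewise sweep of $g_*(W_m\cap L)$ across the support of the conditional measure, meeting every $U_k$. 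Making this sweep uniform over the finitely many patches and targets at once, keeping the base coordinate inside the good region, and then converting nonemptiness back to positive $\mu(\,\cdot\cap L)$-measure via $\supp(\mu|_L)=L$, is the technical heart; I would organize it through the compact group model of the (ergodic components of the) Kronecker factor provided by Proposition \ref{vonNeumann}, together with the almost periodicity above.
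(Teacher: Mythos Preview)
Your reduction is exactly right: setting $W_{m,k}=W_m\cap g_*^{-1}(U_k)$ makes (2) and (3) automatic, and via $\supp(\mu|_L)=L$ the remaining condition (1) becomes the purely topological requirement that $L_m\cap g_*^{-1}(U_k)\neq\emptyset$ for every pair $(m,k)$, where $L_m=W_m\cap L$. This is precisely how the paper proceeds.

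The gap is in how you propose to find $g_*$. Your plan uses almost periodicity of the conditional expectations $\mathbb E_\mu(1_{U_k}\mid\mathcal K_\mu(G))$ to produce $g_*$ for which $\mu_{g_*x}$ is weak$^*$-close to $\mu_x$, and then a ``sweeping'' of the fibre to force the overlap $L_m\cap g_*^{-1}(U_k)$. You correctly identify the obstacle --- that $\mu_x(L_m)>0$ and $\mu_x(g_*^{-1}U_k)>0$ do not give $\mu_x(L_m\cap g_*^{-1}U_k)>0$ --- but the proposed resolution is not an argument. Consider the extreme case where $\mathcal K_\mu(G)$ is trivial (e.g.\ $\mu$ weakly mixing): then every function in $\mathcal H_c$ is constant, so almost periodicity imposes \emph{no} constraint on $g_*$, and there is no base coordinate on which $g_*$ acts ``near-identically'' versus a fibre it ``sweeps''. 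Yet one still has to find a single $g_*\notin F$ with $L_m\cap g_*^{-1}(U_k)\neq\emptyset$ for all $MK$ pairs simultaneously, and nothing in your outline produces it.

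The paper's device is to pass to the $MK$-fold product. Since one needs \emph{different} witnesses $z_{m,k}\in L_m$ for different pairs $(m,k)$, the requirement is that the diagonal $G$-action on $X^{(MK)}$ moves the set $L_1^{(K)}\times\cdots\times L_M^{(K)}$ into $(U_1\times\cdots\times U_K)^{(M)}$. One picks an ergodic component $\rho$ of $\mu^{(MK)}$ charging $L_1^{(K)}\times\cdots\times L_M^{(K)}$; by Lemma~\ref{lem-11} (which rests on $\mathcal K_{\mu^{(n)}}(G)=\mathcal K_\mu(G)^{(n)}$), $\rho$ is an average of products $\mu_{x_{1,1}}\times\cdots\times\mu_{x_{M,K}}$, and since each coordinate lies in $\tilde L$ on a $\rho$-positive set, $\rho$ also charges $(U_1\times\cdots\times U_K)^{(M)}$. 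Ergodicity of $\rho$ then yields $g_*\in G\setminus F$ with
\[
\rho\bigl((L_1^{(K)}\times\cdots\times L_M^{(K)})\cap g_*^{-1}(U_1\times\cdots\times U_K)^{(M)}\bigr)>0,
\]
which unpacks to $L_m\cap g_*^{-1}(U_k)\neq\emptyset$ for every $(m,k)$. This replaces the unavailable fibrewise overlap by a single application of ergodicity in the product, and works uniformly for arbitrary (non-amenable, non-ergodic) $G$ and $\mu$.
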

Now, assuming Claim \ref{cl-1}, we complete the proof of Theorem \ref{thm:IT_support}.
			Put $W=X$. By Claim \ref{cl-1},  there exist open subsets $W_{1},\ldots,W_{K}$ of $X$ and $g_1\in G$ such that for $k=1,\ldots,K$,
		\begin{itemize}
			\item[(1)] $\mu(W_{k}\cap L)>0$;
			\item[(2)]  $g_1W_{k}\subset U_k$.
		\end{itemize}
Item (1) allows us to repeatedly apply Claim \ref{cl-1}. By induction, there exist  nonempty open sets $W_{a}\subset X$ for each $a\in \cup_{n=1}^\infty\{1,2,\ldots,K\}^{(n)}$ and a sequence $(g_n)_{n\in\mathbb{N}}$ of $G$ such that for  every $n\in\mathbb{N}$ and $\textbf{a}=a_1a_2\ldots a_n\in \{1,2,\ldots,K\}^{(n)}$,
		\begin{itemize}
			\item[(a)] $ W_{\textbf{a}k}\subset W_\textbf{a}$ for $k\in\{1,2,\ldots,K\}$;
			\item[(b)] $g_nW_{\textbf{a}}\subset U_{a_n}$;
			\item[(c)] $g_n\notin \{g_1,\ldots,g_{n-1}\}$.
		\end{itemize}
		Thus for any $N\in\N$ and $(a_n)_{n=1}^N\in \{1,2,\ldots,K\}^{(N)}$, one has 	$$\cap_{n=1}^N g_n^{-1}U_{a_n}\neq\emptyset.$$
		Therefore, $\{g_n\}_{n=1}^\infty$ is an infinite independence set of $(U_1,\ldots, U_K)$. By the argument presented at the start of the proof, the theorem is now established.
		\end{proof}
				
		It remains to prove Claim \ref{cl-1}. 
		\begin{proof}[Proof of Claim \ref{cl-1}] Rewrite  ${\bf x}=(x_{1,1},\cdots,x_{M,K})\in X^{MK}$, and $ \mu_{\bf x}=\mu_{ x_{1,1}}\times\cdots\times\mu_{x_{M,K}}$ for ${\bf x}\in X^{(MK)}$. Let  $$\mu^{(MK)}=\int_{X^{(MK)}} \rho_{\bf x}d\mu^{(MK)}(\bf x)$$	be the ergodic decomposition of $\mu^{(MK)}$. Denote $L_m=W_m\cap L$ for $m=1,2,\ldots,M$. By hypothesis, one has
			\begin{align}\label{eq-1}
				\mu^{(MK)}(L_1^{(K)}\times\cdots\times L_M^{(K)})>0.
			\end{align}
			Together with Lemma \ref{lem-11} and \eqref{eq-1}, we can find an ergodic measure $\rho$ of the product system $(X^{(MK)},G)$ such that 
		\begin{equation}\label{eq:3222143}
		\rho(L_1^{(K)}\times\cdots\times L_M^{(K)})>0,
		\end{equation}
			and
			\begin{equation}\label{eq:3222144}\rho=\int_{X^{(MK)}} \mu_{\bf x}d\rho(\bf x).\end{equation}
			Since $L_1^{(K)}\times\cdots\times L_M^{(K)}\subset \tilde L^{(MK)}$, for any ${\bf x}\in L_1^{(K)}\times\cdots\times L_M^{(K)}$,
			$$\mu_{\bf x}((U_1\times \cdots \times U_K)^{(M)})=\mu_{ x_{1,1}}(U_1)\times\cdots\times \mu_{ x_{1,K}}(U_K)\times\cdots\times \mu_{ x_{M,K}}(U_K)>0,$$
		which together with \eqref{eq:3222143} and \eqref{eq:3222144}, implies that
			\begin{align*}\rho((U_1\times \cdots \times U_K)^{(M)})>0.
			\end{align*}
			The ergodicity of $\rho$ ensures that  there exists $g_*\in G\setminus F$ such that 
			$$\rho((L_1^{(K)}\times\cdots\times L_M^{(K)})\cap g_*^{-1}((U_1\times \cdots \times U_K)^{(M)}))>0.$$
			Thus, $L_m\cap g_*^{-1}(U_k)\neq\emptyset$  for $m=1,\ldots,M, k=1,\ldots,K.$ Notice that every $g_*^{-1}(U_k)$ is an open subset of $X$, by \eqref{eq-22} and the assumption that $\mu(L_m)>0$,
			$$\mu(L_m\cap g_*^{-1}(U_k))>0\text{ for }m=1,\ldots,M, k=1,\ldots,K.$$
		For $m=1,\ldots,M,k=1,\ldots,K$, let
			$$W_{m,k}=W_m\cap  g_*^{-1}(U_k) $$
		Then for $m=1,\ldots,M,k=1,\ldots,K$,
			$$\mu(L\cap W_{m,k})= \mu(L_m\cap g_*^{-1}(U_k))>0.$$
		Thus, the open  subsets $W_{m,k}$,  $m=1,\ldots,M,k=1,\ldots,K$, and $g_*\in G\setminus F$ satisfy (1), (2) and (3). The proof of Claim \ref{cl-1} is completed.
	\end{proof}
	
	\section{Proof of Theorem \ref{thm-sequence-entropy}}\label{sec:new}
 Let $(X,G)$ be a tds and $\mu\in\mathcal{M}(X,G)$. Recall that  the disintegration of $\mu$ over $\mathcal{K}_\mu(G)$ is given by $\mu=\int_X\mu_xd\mu(x)$, and for	$K\in\N \text{ with }K\ge 2$,
\begin{align*}
	\lambda_K^X=\int_X\mu_x \times \cdots \times \mu_x \, d\mu(x).
\end{align*}
Before proving Theorem \ref{thm-sequence-entropy}, we establish the following lemma.
\begin{lem}\label{lem"sim. ob}  Let $K\in\N$ with  $K\ge 2$. If  $(X,G)$ has no essential sequence entropy $K$-tuples for $\mu$, then 
	$$|    \supp(\mu_x)|\le K-1,\text{ for $\mu$-a.e. }x\in X.$$                                                  
\end{lem}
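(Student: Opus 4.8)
The plan is to argue by contradiction, reducing the statement to the characterization of sequence entropy tuples supplied by Proposition~\ref{suppordisin}, namely $SE_K^\mu(X,G)=\supp(\lambda_K^X)\setminus\Delta_K(X)$. Suppose the conclusion fails, so that the set $B:=\{x\in X:|\supp(\mu_x)|\ge K\}$ has $\mu(B)>0$. The key observation is that the condition $|\supp(\mu_x)|\ge K$ can be detected through a fixed countable basis $\{O_i\}_{i\in\N}$ of $X$: one has $|\supp(\mu_x)|\ge K$ if and only if there exist pairwise disjoint basic open sets $O_{i_1},\ldots,O_{i_K}$ with $\mu_x(O_{i_j})>0$ for every $j$. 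Indeed, $K$ distinct points of $\supp(\mu_x)$ can be separated by pairwise disjoint basic neighborhoods, each of positive $\mu_x$-measure since it meets the support; conversely, $K$ pairwise disjoint open sets of positive measure each contain a support point, yielding $K$ distinct points.

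With this observation I would write $B=\bigcup A_{i_1,\ldots,i_K}$, where the union ranges over all $K$-tuples of pairwise disjoint basic open sets and $A_{i_1,\ldots,i_K}:=\{x:\mu_x(O_{i_j})>0,\ j=1,\ldots,K\}$. Each $A_{i_1,\ldots,i_K}$ is measurable because $x\mapsto\mu_x(O)$ is measurable for every open $O$. Since $\mu(B)>0$ and the union is countable, there exist pairwise disjoint basic open sets $O_1,\ldots,O_K$ (after relabelling) such that $\mu(A)>0$, where $A:=\{x:\mu_x(O_k)>0,\ k=1,\ldots,K\}$.

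Next I would exploit positivity. From the definition of $\lambda_K^X$,
$$\lambda_K^X(O_1\times\cdots\times O_K)=\int_X \mu_x(O_1)\cdots\mu_x(O_K)\,d\mu(x)\ge \int_A \mu_x(O_1)\cdots\mu_x(O_K)\,d\mu(x)>0,$$
since the integrand is strictly positive on $A$ and $\mu(A)>0$. Hence the open set $O_1\times\cdots\times O_K$ meets $\supp(\lambda_K^X)$; choose $(z_1,\ldots,z_K)$ in this intersection. Because $O_1,\ldots,O_K$ are pairwise disjoint, the coordinates $z_1,\ldots,z_K$ are distinct, so $(z_1,\ldots,z_K)$ is essential and in particular lies in $\supp(\lambda_K^X)\setminus\Delta_K(X)$.

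Finally, Proposition~\ref{suppordisin} identifies $(z_1,\ldots,z_K)$ as an element of $SE_K^\mu(X,G)$, that is, an essential sequence entropy $K$-tuple for $\mu$, contradicting the hypothesis. This contradiction forces $\mu(B)=0$, which is exactly the assertion that $|\supp(\mu_x)|\le K-1$ for $\mu$-a.e. $x\in X$. The only delicate point is the measurable bookkeeping of the first two paragraphs, namely verifying that the support-cardinality condition is captured exactly by the countable family of disjoint basic-open configurations and that each $A_{i_1,\ldots,i_K}$ is measurable; the remaining steps are routine once Proposition~\ref{suppordisin} is in hand.
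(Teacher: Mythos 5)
Your proof is correct, and it takes a route that differs from the paper's in a meaningful way. The paper also argues by contradiction and also funnels everything through Proposition~\ref{suppordisin}, but its mechanism for producing a positive-measure product set is analytic: it applies Lusin's theorem to get a compact set $F$ of positive measure on which $x\mapsto\mu_x$ is continuous, fixes a single fiber $\mu_z$ with $z\in F$, picks an essential tuple $(x_1,\ldots,x_K)\in\supp(\mu_z)^{(K)}$, and then uses the continuity together with Lemma~\ref{lem-1} to show that every product neighborhood $U_1\times\cdots\times U_K$ of that tuple satisfies $\lambda_K^X(U_1\times\cdots\times U_K)\ge\int_{F\cap U}\prod_k\mu_x(U_k)\,d\mu(x)>0$, so that this specific tuple lies in $\supp(\lambda_K^X)$. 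Your argument replaces Lusin and the weak$^*$ continuity lemma by a purely combinatorial/measure-theoretic selection: the countable basis lets you fix, once and for all, pairwise disjoint open sets $O_1,\ldots,O_K$ that witness $|\supp(\mu_x)|\ge K$ on a set of positive measure, after which positivity of $\lambda_K^X(O_1\times\cdots\times O_K)$ is a one-line integration and the essential tuple is extracted from $\supp(\lambda_K^X)$ at the end rather than specified at the start. Two small advantages of your version: it establishes explicitly that the bad set $B=\{x:|\supp(\mu_x)|\ge K\}$ is measurable (a point the paper silently assumes when it posits a measurable $A$ of positive measure), and it avoids the regularity machinery entirely. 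What the paper's version buys in exchange is a slightly sharper localization: it exhibits the sequence entropy tuple inside the support of one explicit fiber $\mu_z$, which is the picture Lemma~\ref{lem-full} and Theorem~\ref{thm:IT_support} trade on elsewhere in the paper. Both proofs are complete; yours is the more elementary of the two.
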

\begin{proof}
Suppose for a contradiction that there exists a measurable subset $A$ of $X$ with $\mu(A)>0$ such that for all $x\in A$,
	$$|\text{supp}(\mu_x)|\ge K.$$  
	By Lusin's theorem, there exists a compact subset $F\subset A$ with $\mu(F)>0$ such that the map $x\mapsto \mu_x$ is continuous on $F$. Fix an essential tuple $(x_1,x_2,\ldots,x_K)\in \supp(\mu_z)^{(K)}$ for some $z\in F$. We now prove $(x_1,x_2,\ldots,x_K)$ is an essential sequence entropy $K$-tuples for $\mu$, yielding a contradiction that completes the proof.

	For any neighborhood $U_k$ of $x_k$, for $k=1,2,\ldots,K$, 
	\[(\mu_z\times \cdots\times\mu_z)(U_1\times\cdots\times U_K)>0.\]
	By the continuity of the map $x\mapsto \mu_x$ is continuous on $F$ and Lemma \ref{lem-1}, there exists an open neighborhood $U$ of $z$ such that $\mu(F\cap U)>0$ and for any $x'\in F\cap U$, 
	\[(\mu_{x'}\times \cdots\times\mu_{x'})(U_1\times\cdots\times U_K)>0.\]
	Thus, 
	\[	\lambda_K^X(U_1\times \cdots\times U_K)\ge \int_{F\cap U}(\mu_x\times \cdots\times\mu_x)(U_1\times\cdots\times U_K) \, d\mu(x)>0.\]
	By Proposition \ref{suppordisin}, one has $(x_1,x_2,\ldots,x_K)$ is an essential sequence entropy $K$-tuples for $\mu$. The proof is completed by the argument given at the beginning.
\end{proof}

We are now in a position to complete the proof of Theorem \ref{thm-sequence-entropy}.
	\begin{proof}[Proof of Theorem \ref{thm-sequence-entropy}]	Let $K\in\N$ with $K\ge 2$. We now suppose that  $(X,G)$ has   essential sequence entropy $K$-tuples for $\mu$, and prove $h_\mu^*(G)\le \log(K-1)$.
		
			By Lemma \ref{lem"sim. ob}, one has 	$$|    \supp(\mu_x)|\le K-1,\text{ for $\mu$-a.e. }x\in X.$$      
		Then for any $\alpha\in\mathcal{P}_X^\mu$,  
		$$|\{A\in\alpha: \mu_x(A)>0  \}|\le K-1,\text{ for $\mu$-a.e. $x\in X$}.$$
		Hence the convex of the function $-x\log x$ implies that
		\begin{align*}
			H_\mu(\alpha|\mathcal{K}_\mu)&=\int_X\sum_{A\in\alpha}-\mathbb{E}_\mu(1_A|\mathcal{K}_\mu)(x)\log \mathbb{E}_\mu(1_A|\mathcal{K}_\mu)(x)d\mu(x)\\
			&=\int_X\sum_{A\in\alpha}-\mu_x(A)\log \mu_x(A)d\mu(x)\\
			&=\int_X\sum_{A\in\alpha\atop \mu_x(A)>0}-\mu_x(A)\log \mu_x(A)d\mu(x)\\
			&\le \int_X-(\sum_{A\in\alpha\atop \mu_x(A)>0}\mu_x(A))\log \frac{\sum_{A\in\alpha\atop \mu_x(A)>0}\mu_x(A)}{|\{A\in\alpha: \mu_x(A)>0\}|}d\mu(x)\\
			&\le \int_X\log(K-1) d\mu(x)=\log (K-1).
		\end{align*}
		
		By Theorem \ref{thm-se-enytopy},  we have $h^{*}_{\mu}(G,\alpha)=H_\mu(\alpha|\mathcal{K}_\mu)\leq \log (K-1)$. With the arbitrariness of $\alpha$, we have  
		$h^{*}_{\mu}(G)\leq\log (K-1),$ which proves Theorem \ref{thm-sequence-entropy}.
	\end{proof}
	\section{Proof of Theorem \ref{cor:se=sm}}\label{sec:proof_thm_se_ms}
In the first subsection, we present the decomposition of ergodic measures via their Kronecker factor, which plays a key role in the proof of Theorem \ref{cor:se=sm}. In the following subsection, we complete the proof of Theorem \ref{cor:se=sm}. We note that the ergodic decomposition in the first subsection holds without assuming that the group 
$G$ is amenable.
	\subsection{Decomposition of ergodic measures}\label{subsec:ergodic} 
	Let $(X,G)$ be a tds, and let $\mu \in \mathcal{M}^e(X,G)$. Let $\pi \colon (X,\mu,G) \to (Y,\nu,G)$ denote the factor map to its Kronecker factor, with $\mu = \int_{Y} \mu_y \, d\nu(y)$ being the disintegration  of $\mu $ over $\pi$. 
	Proposition~\ref{vonNeumann} induces the  following diagram:
	\begin{align}\label{graph}\begin{split}
			\xymatrix{
				&~ &(X, \mu, G)  \ar[d]_{\pi}              \\
				&	(Z,\nu_Z, G) \ar[r]^{\tau } & (Z/H,\nu_{Z/H}, G), &}
		\end{split}
	\end{align}
	where $\pi:X\to Z/H$ is the factor map to the Kronecker factor of $(X, \mu, G)  $, and $\tau,Z,H,\nu_Z,\nu_{Z/H},\theta$ are as in Proposition \ref{vonNeumann}.  Recall that $\theta: G\to Z$ is a continuous group homomorphism. The denseness of $\theta(G)$ in $Z$ implies that $(Z,G)$ is both minimal and uniquely ergodic. For $n\in \N$ and ${\bf z}=(z_1,\dots,z_n)\in Z^{(n)}$, denote $$Z{\bf z}=\{ z{\bf z}:z\in Z\}=\{(zz_1,\dots,zz_n):z\in Z \}.$$
	It is easy to see that $(Z{\bf z},G)$ is a subsystem of $(Z^{(n)},G)$. The following lemma about the system $(Z{\bf z},G)$ is straightforward.
	\begin{lem}\label{lem-Zz}For $n\in\N$ and ${\bf z}\in Z^{(n)}$, $(Z{\bf z},G)$ is topologically conjugate to $(Z,G)$ with respect to the factor map  $p_{\bf z}: z\to z{\bf z}.$ In particular,  $(Z{\bf z},G)$ is both minimal and uniquely ergodic.
	\end{lem}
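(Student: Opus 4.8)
The plan is to verify directly that the map $p_{\bf z}\colon Z\to Z^{(n)}$, $z\mapsto z{\bf z}=(zz_1,\dots,zz_n)$, is an equivariant homeomorphism onto $Z{\bf z}$. Once this is established, minimality and unique ergodicity transfer automatically, since both are invariants of topological conjugacy and have already been recorded for $(Z,G)$ in the paragraph preceding the lemma.

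First I would check that $p_{\bf z}$ is a continuous bijection onto $Z{\bf z}$. Continuity is immediate from the continuity of multiplication in the topological group $Z$, applied coordinatewise. Surjectivity onto $Z{\bf z}$ holds by the very definition $Z{\bf z}=\{z{\bf z}:z\in Z\}$. For injectivity, if $z{\bf z}=z'{\bf z}$ then comparing the first coordinate gives $zz_1=z'z_1$, and right-cancelling $z_1$ in the group $Z$ yields $z=z'$. Since $Z$ is compact and $Z^{(n)}$ is Hausdorff, a continuous bijection $Z\to Z{\bf z}$ is automatically a homeomorphism; in particular $Z{\bf z}=p_{\bf z}(Z)$ is compact, hence a closed $G$-invariant subset of $Z^{(n)}$, which also confirms that $(Z{\bf z},G)$ is genuinely a subsystem.

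Next I would verify equivariance. Recall that the $G$-action on $Z$ is the left translation $z\mapsto\theta(g)z$, and on $Z^{(n)}$ it is the diagonal left translation $(w_1,\dots,w_n)\mapsto(\theta(g)w_1,\dots,\theta(g)w_n)$. Then for every $g\in G$ and $z\in Z$,
\[
p_{\bf z}(\theta(g)z)=\big((\theta(g)z)z_1,\dots,(\theta(g)z)z_n\big)=\big(\theta(g)(zz_1),\dots,\theta(g)(zz_n)\big)=g\cdot p_{\bf z}(z),
\]
where the middle equality is just associativity in $Z$. Thus $p_{\bf z}$ intertwines the two actions and is therefore a topological conjugacy between $(Z,G)$ and $(Z{\bf z},G)$.

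Finally I would invoke the fact, already noted above the lemma, that the denseness of $\theta(G)$ in $Z$ forces $(Z,G)$ to be minimal and uniquely ergodic; both properties are preserved under topological conjugacy and so pass to $(Z{\bf z},G)$. I do not expect any genuine obstacle here, as the argument is a direct verification; the only two points deserving a line of care are the compactness argument that upgrades the continuous bijection to a homeomorphism (simultaneously showing $Z{\bf z}$ is a bona fide closed subsystem) and the explicit use of associativity to obtain equivariance, which amounts to the commutativity of left translation with right multiplication by the fixed coordinates $z_i$.
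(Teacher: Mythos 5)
Your proof is correct and is precisely the direct verification the paper has in mind: the paper states this lemma without proof, calling it straightforward, and your argument (continuous equivariant bijection from a compact space to a Hausdorff space, hence a conjugacy, with minimality and unique ergodicity of $(Z,G)$ imported from the dense-image remark preceding the lemma) fills in exactly those details. The two points you flag for care --- upgrading the bijection to a homeomorphism via compactness, and associativity giving equivariance --- are indeed the only substantive steps, and both are handled correctly.
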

	Lemma \ref{lem-Zz} ensures that the unique ergodic measure $\lambda_{\bf z}^Z$ of $(Z{\bf z},G)$ is of the following form:
	\begin{align}\label{de-2-lambda-zz}
		\lambda_{\bf z}^Z=(p_{\bf z})_*(\nu_Z)=\int_{Z}\delta_{z{\bf z}}d\nu_Z(z),
	\end{align} 
	where $p_{\bf z}$ is defined in Lemma \ref{lem-Zz} and $\nu_Z$ is the Haar measure on $Z$.
	\begin{lem}\label{lem-er-decom-Z}For $n\in\N$, $\int_{Z^{(n)}} \lambda_{\bf z}^{Z}d\nu_Z^{(n)}({\bf z})$ is the ergodic decomposition of $\nu_{Z}^{(n)}$. 
	\end{lem}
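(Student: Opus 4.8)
The plan is to verify the two defining features of an ergodic decomposition — that the integrand consists of ergodic measures and that the mixture reproduces $\nu_Z^{(n)}$ — and then to invoke the uniqueness built into the Choquet representation to conclude. Throughout I write $\Phi({\bf z})=\lambda_{\bf z}^Z$ for the resulting map $Z^{(n)}\to\M(Z^{(n)},G)$.

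First I would establish the barycenter identity
\[
\int_{Z^{(n)}}\lambda_{\bf z}^Z\,d\nu_Z^{(n)}({\bf z})=\nu_Z^{(n)}.
\]
Using \eqref{de-2-lambda-zz}, for $f\in C(Z^{(n)})$ this reduces to
\[
\int_{Z^{(n)}}\int_Z f(w{\bf z})\,d\nu_Z(w)\,d\nu_Z^{(n)}({\bf z})=\int_{Z^{(n)}}f\,d\nu_Z^{(n)}.
\]
By Fubini I would interchange the two integrals and, for each fixed $w\in Z$, apply the left-invariance of the Haar measure $\nu_Z$ coordinatewise: the diagonal translation ${\bf z}\mapsto w{\bf z}$ preserves the product measure $\nu_Z^{(n)}$, so the inner integral over ${\bf z}$ equals $\int_{Z^{(n)}}f\,d\nu_Z^{(n)}$ independently of $w$, and integrating this constant against $\nu_Z(w)$ (a probability measure) yields the claim.

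Next I would record that each $\lambda_{\bf z}^Z$ is ergodic: by Lemma \ref{lem-Zz} the subsystem $(Z{\bf z},G)$ is uniquely ergodic with invariant measure $\lambda_{\bf z}^Z$, and the invariant measure of a uniquely ergodic system is ergodic. Moreover $\Phi$ is continuous for the weak$^*$ topology, since for $f\in C(Z^{(n)})$ the quantity $\int f\,d\lambda_{\bf z}^Z=\int_Z f(w{\bf z})\,d\nu_Z(w)$ depends continuously on ${\bf z}$ by the uniform continuity of $f$ on the compact set $Z^{(n)}$; hence $\Phi$ is Borel measurable, and the pushforward $\tau:=\Phi_*\nu_Z^{(n)}$ is a Borel probability measure on $\M(Z^{(n)},G)$ concentrated on $\M^e(Z^{(n)},G)$.

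Finally, combining these, $\tau$ is a Borel probability measure supported on $\M^e(Z^{(n)},G)$ whose barycenter is $\nu_Z^{(n)}$. By the uniqueness of the measure in the Choquet representation of $\nu_Z^{(n)}$ (recalled in the excerpt for the ergodic decomposition), $\tau$ is exactly the ergodic decomposition measure, so $\int_{Z^{(n)}}\lambda_{\bf z}^Z\,d\nu_Z^{(n)}({\bf z})$ is the ergodic decomposition of $\nu_Z^{(n)}$. To match the disintegration-over-$\I_{\nu_Z^{(n)}}(G)$ formulation used elsewhere, I would also note the self-consistency $\lambda_{w{\bf z}}^Z=\lambda_{\bf z}^Z$ for every $w\in Z$: indeed $Z(w{\bf z})=(Zw){\bf z}=Z{\bf z}$, so the two subsystems coincide and share the same unique invariant measure; in particular $\Phi$ is $G$-invariant, hence $\I_{\nu_Z^{(n)}}(G)$-measurable, which pins down $\Phi$ as the disintegration over the invariant $\sigma$-algebra. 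The only genuinely technical point is the Fubini/Haar-invariance computation of the first step; everything else is bookkeeping around the already-stated uniqueness of the ergodic decomposition and the unique ergodicity furnished by Lemma \ref{lem-Zz}.
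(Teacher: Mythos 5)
Your proposal is correct and follows essentially the same route as the paper: the core of both arguments is the Fubini/Haar-invariance computation showing $\int_{Z^{(n)}}\lambda_{\bf z}^Z\,d\nu_Z^{(n)}({\bf z})=\nu_Z^{(n)}$, combined with the ergodicity of each $\lambda_{\bf z}^Z$ furnished by the unique ergodicity of $(Z{\bf z},G)$ in Lemma \ref{lem-Zz}. The additional bookkeeping you supply (weak$^*$ continuity of ${\bf z}\mapsto\lambda_{\bf z}^Z$, the pushforward measure on $\M(Z^{(n)},G)$, and the appeal to Choquet uniqueness) merely makes explicit the uniqueness step that the paper leaves implicit, so it is a welcome but not substantively different elaboration.
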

	\begin{proof}By \eqref{de-2-lambda-zz}, one obtains that
		\begin{align*}
			\int_{Z^{(n)}} \lambda_{\bf z}^{Z}d\nu_Z^{(n)}({\bf z})&=\int_{Z^{(n)}}\int_{Z}\delta_{z\bf z}d\nu_Z(z)d\nu_Z^{(n)}({\bf z})\\
			&=\int_{Z}\int_{Z^{(n)}}\delta_{z\bf z}d\nu_Z^{(n)}({\bf z})d\nu_Z(z)\\
			&=\int_{Z}\nu_Z^{(n)}d\nu_Z(z)=\nu_Z^{(n)}.
		\end{align*}
		We finish the proof of Lemma \ref{lem-er-decom-Z}, since  $\lambda_{\bf z}^{Z}$ is ergodic for every ${\bf z}\in Z^{(n)}$.
	\end{proof}
	Using the quotient map $\tau: Z \to Z/H$ in Proposition \ref{vonNeumann}, for $n\in\N$ and ${\bf z}\in Z^{(n)}$, we can define an ergodic measure on $(Z/H)^{(n)}$ by
	\begin{align}\label{eq-de-2}
		\lambda_{\bf z}^{Z/H}:=(\tau^{(n)})_*(\lambda_{\bf z}^Z)=\lambda_{\bf z}^Z\circ(\tau^{-1})^{(n)}.
	\end{align}
	
	\begin{lem}\label{lem-er-decom-Z/H}For $n\in\N$, $\int_{Z^{(n)}} \lambda_{\bf z}^{Z/H}d\nu_Z^{(n)}({\bf z})$ is the ergodic decomposition of $\nu_{Z/H}^{(n)}$.
	\end{lem}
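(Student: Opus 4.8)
The plan is to derive this ergodic decomposition by pushing forward the one already obtained for $\nu_Z^{(n)}$ in Lemma \ref{lem-er-decom-Z} through the factor map $\tau^{(n)}\colon Z^{(n)}\to (Z/H)^{(n)}$. The point is that $\tau\colon Z\to Z/H$ is an equivariant continuous surjection, so $\tau^{(n)}$ is a topological factor map of $(Z^{(n)},G)$ onto $((Z/H)^{(n)},G)$, and by Proposition \ref{vonNeumann}(3) we have $\nu_{Z/H}=\tau_*(\nu_Z)$, hence $\nu_{Z/H}^{(n)}=(\tau^{(n)})_*(\nu_Z^{(n)})$.

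First I would establish the integral identity. Starting from $\nu_Z^{(n)}=\int_{Z^{(n)}}\lambda_{\bf z}^Z\,d\nu_Z^{(n)}({\bf z})$ (Lemma \ref{lem-er-decom-Z}), I apply $(\tau^{(n)})_*$ to both sides. Evaluating on an arbitrary Borel set $B\subseteq (Z/H)^{(n)}$ and interchanging integration with the set function via Fubini shows that push-forward commutes with the integral over the parameter ${\bf z}$; together with the definition $\lambda_{\bf z}^{Z/H}=(\tau^{(n)})_*(\lambda_{\bf z}^Z)$ from \eqref{eq-de-2} this yields
\begin{align*}
\nu_{Z/H}^{(n)} &= (\tau^{(n)})_*(\nu_Z^{(n)}) = \int_{Z^{(n)}} (\tau^{(n)})_*(\lambda_{\bf z}^Z)\,d\nu_Z^{(n)}({\bf z}) = \int_{Z^{(n)}} \lambda_{\bf z}^{Z/H}\,d\nu_Z^{(n)}({\bf z}).
\end{align*}

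It then remains to recognize this as the ergodic decomposition. Each $\lambda_{\bf z}^{Z/H}$ is ergodic, being the image under the factor map $\tau^{(n)}$ of the ergodic measure $\lambda_{\bf z}^Z$ (ergodicity of the latter was noted in Lemma \ref{lem-er-decom-Z}), since a factor of an ergodic system is ergodic. Thus the displayed identity presents $\nu_{Z/H}^{(n)}$ as a barycenter of ergodic measures, and by the uniqueness of the ergodic decomposition furnished by the Choquet representation theorem, this integral representation is exactly the ergodic decomposition of $\nu_{Z/H}^{(n)}$. The argument is routine and parallels Lemma \ref{lem-er-decom-Z}; the only points that deserve care are the interchange of push-forward with integration and the measurability of ${\bf z}\mapsto\lambda_{\bf z}^{Z/H}$, which guarantees that the decomposition is supported, after transport to $\M((Z/H)^{(n)},G)$, on the ergodic measures.
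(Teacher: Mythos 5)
Your proposal is correct and follows essentially the same route as the paper: both push the ergodic decomposition $\nu_Z^{(n)}=\int_{Z^{(n)}}\lambda_{\bf z}^Z\,d\nu_Z^{(n)}({\bf z})$ of Lemma \ref{lem-er-decom-Z} forward under $\tau^{(n)}$, interchange the push-forward with the integral, invoke the definition \eqref{eq-de-2}, and conclude from the ergodicity of each $\lambda_{\bf z}^{Z/H}$ (which, as you note, holds because a factor of an ergodic measure is ergodic) together with uniqueness of the ergodic decomposition.
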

	\begin{proof}By Lemma \ref{lem-er-decom-Z} and \eqref{eq-de-2},
		\begin{align*}
			\int_{Z^{(n)}} \lambda_{\bf z}^{Z/H}d\nu_Z^{(n)}({\bf z})&=\int_{Z^{(n)}}(\tau^{(n)})_*(\lambda_{\bf z}^Z)d\nu_Z^{(n)}({\bf z})
			\\
			&=(\tau^{(n)})_*\left(\int_{Z^{(n)}}\lambda_{\bf z}^Zd\nu_Z^{(n)}({\bf z})\right)
			=(\tau^{(n)})_*(\nu_Z^{(n)})=\nu_{Z/H}^{(n)}.
		\end{align*}
		This  proves Lemma \ref{lem-er-decom-Z/H} since $\lambda_{\bf z}^{Z/H}$ is ergodic  for every ${\bf z}\in Z^{(n)}$.
	\end{proof}
	Recall that $\pi:(X,\mu,G)\to (Z/H,\nu_{Z/H},G)$  is the factor map to its Kronecker factor.  Let $\mu=\int_{Z/H} \mu_yd\nu_{Z/H}(y)$ be the disintegration of $\mu$ over $\pi$. For $n\in\N$ and ${\bf z}\in Z^{(n)}$, define
	\begin{align}\label{de-lambda-zx-1}\lambda_{\bf z}^{X}:=\int_{(Z/H)^{(n)}}\mu_{y_1}\times\ldots\times \mu_{y_n}d\lambda_{\bf z}^{Z/H}(y_1,\ldots,y_n).\end{align}
	Then, 
	\begin{align}\label{de-lambda-zx}
		\lambda_{\bf z}^{X}=\int_{Z^{(n)}}\mu_{\tau(z_1)}\times\ldots\times \mu_{\tau(z_n)}d\lambda_{\bf z}^{Z}(z_1,\ldots,z_n).
	\end{align}
	Here $\lambda_{\bf z}^{X}$ is well defined since the map $y\mapsto \mu_y$ is measurable.
	\begin{lem}\label{lem-er-demco-Xn} For $n\in\N$, $\int_{Z^{(n)}}\lambda_{\bf z}^{X}d\nu_Z^{(n)}({\bf z})$ is the ergodic decomposition of $\mu^{(n)}$. 
	\end{lem}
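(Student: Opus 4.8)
The plan is to identify the ergodic components of $\mu^{(n)}$ with the measures $\lambda_{\bf z}^{X}$ and to match their distribution with $\nu_Z^{(n)}$. Concretely, I must check two things: that $\int_{Z^{(n)}}\lambda_{\bf z}^{X}\,d\nu_Z^{(n)}({\bf z})=\mu^{(n)}$, and that $\lambda_{\bf z}^{X}$ is ergodic for $\nu_Z^{(n)}$-a.e.\ ${\bf z}$. The bridge between the ``downstairs'' decomposition on $(Z/H)^{(n)}$ (Lemma~\ref{lem-er-decom-Z/H}) and the ``upstairs'' decomposition on $X^{(n)}$ will be the uniqueness of the ergodic decomposition, transported across the factor map $\pi^{(n)}$.

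I would first record the integral identity. Inserting the definition \eqref{de-lambda-zx-1} and applying Fubini to interchange the integral over ${\bf z}$ with the integral over $(Z/H)^{(n)}$, the inner integral $\int_{Z^{(n)}}\lambda_{\bf z}^{Z/H}\,d\nu_Z^{(n)}({\bf z})$ collapses to $\nu_{Z/H}^{(n)}$ by Lemma~\ref{lem-er-decom-Z/H}. Since $\nu_{Z/H}^{(n)}$ is the $n$-fold product of $\nu_{Z/H}$ and the integrand $\mu_{y_1}\times\cdots\times\mu_{y_n}$ is a product, the resulting measure $\int_{(Z/H)^{(n)}}\mu_{y_1}\times\cdots\times\mu_{y_n}\,d\nu_{Z/H}^{(n)}({\bf y})$ factors as $\mu\times\cdots\times\mu=\mu^{(n)}$, using $\mu=\int_{Z/H}\mu_y\,d\nu_{Z/H}(y)$. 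This gives the integral identity.

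For the ergodicity I would lean on Lemma~\ref{lem-11} rather than attempt a direct computation (which already for the Anzai skew product fails on the diagonal and holds only for generic offsets). Writing the ergodic decomposition $\mu^{(n)}=\int_{X^{(n)}}\rho_{\bf x}\,d\mu^{(n)}({\bf x})$, Lemma~\ref{lem-11} expresses each component as a relatively independent joining over the Kronecker algebra, $\rho_{\bf x}=\int \mu_{x_1}\times\cdots\times\mu_{x_n}\,d\rho_{\bf x}$; since $\mu_{x_i}=\mu_{\pi(x_i)}$ depends only on the image in $Z/H$, this rewrites as $\rho_{\bf x}=\int_{(Z/H)^{(n)}}\mu_{y_1}\times\cdots\times\mu_{y_n}\,d\big((\pi^{(n)})_*\rho_{\bf x}\big)({\bf y})$. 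Thus $\rho_{\bf x}$ is the image of $(\pi^{(n)})_*\rho_{\bf x}$ under the fixed measurable lifting map $\Lambda:\eta\mapsto\int \mu_{y_1}\times\cdots\times\mu_{y_n}\,d\eta$, and by \eqref{de-lambda-zx-1} the same map sends $\lambda_{\bf z}^{Z/H}$ to $\lambda_{\bf z}^{X}$.

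It then remains to match the two decompositions on the base. As $\pi^{(n)}$ is a factor map, each $(\pi^{(n)})_*\rho_{\bf x}$ is ergodic and $\int (\pi^{(n)})_*\rho_{\bf x}\,d\mu^{(n)}({\bf x})=\nu_{Z/H}^{(n)}$; hence $\{(\pi^{(n)})_*\rho_{\bf x}\}$ and $\{\lambda_{\bf z}^{Z/H}\}_{{\bf z}\sim\nu_Z^{(n)}}$ are both ergodic decompositions of $\nu_{Z/H}^{(n)}$, so by uniqueness their laws on $\mathcal{M}^e((Z/H)^{(n)})$ coincide. Pushing both laws forward by $\Lambda$ identifies the law of $\rho_{\bf x}$ with the law of $\lambda_{\bf z}^{X}$; since the former is the ergodic decomposition of $\mu^{(n)}$, this simultaneously shows that $\lambda_{\bf z}^{X}$ is ergodic for $\nu_Z^{(n)}$-a.e.\ ${\bf z}$ and that $\int_{Z^{(n)}}\lambda_{\bf z}^{X}\,d\nu_Z^{(n)}({\bf z})=\mu^{(n)}$ is the ergodic decomposition. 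The main obstacle is exactly this last bookkeeping: transporting the ergodic decomposition across $\pi^{(n)}$ and verifying that the law of the base components is precisely $\nu_Z^{(n)}$; ergodicity of the individual $\lambda_{\bf z}^{X}$ is not checked by hand but inherited from that of the $\rho_{\bf x}$.
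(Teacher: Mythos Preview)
Your proposal is correct and follows essentially the same route as the paper: both arguments express each ergodic component $\rho_{\bf x}$ of $\mu^{(n)}$ as $\Lambda\big((\pi^{(n)})_*\rho_{\bf x}\big)$ with $\Lambda(\eta)=\int\mu_{y_1}\times\cdots\times\mu_{y_n}\,d\eta$, observe that the pushforwards $(\pi^{(n)})_*\rho_{\bf x}$ form the ergodic decomposition of $\nu_{Z/H}^{(n)}$, invoke uniqueness to match them with the $\lambda_{\bf z}^{Z/H}$ from Lemma~\ref{lem-er-decom-Z/H}, and then lift back via $\Lambda$. The only cosmetic difference is that you cite Lemma~\ref{lem-11} directly and name the map $\Lambda$ explicitly, whereas the paper re-derives the formula $\mu_{\mathcal{I},{\bf x}}=\int_{(Z/H)^{(n)}}\mu_{\bf y}\,d(\pi^{(n)})_*(\mu_{\mathcal{I},{\bf x}})$ from \eqref{eq1} and the inclusion $\mathcal{I}_{\mu^{(n)}}(G)\subset(\mathcal{K}_\mu(G))^{(n)}$.
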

	\begin{proof} 
			For convenience, we denote ${\bf x}=(x_1,\dots,x_n)\in X^{(n)}$ and ${\bf y}=(y_1,\dots,y_n)\in (Z/H)^{(n)}$.
		By Lemma \ref{n_kronecker},
		\begin{align}\label{eq-123}\mathcal{I}_{\mu^{(n)}}(G)\subset \mathcal{K}_{\mu^{(n)}}(G)=(\mathcal{K}_{\mu}(G))^{(n)}=(\pi^{-1}(\mathcal{B}_{Z/H}^{\nu_{Z/H}}))^{(n)}.
		\end{align}
	 Let 
		$$\int_{X^{(n)}} \mu_{\mathcal{I}, {\bf x}}d\mu^{(n)}({\bf x})\text{ and }\int_{X^{(n)}} \mu_{\mathcal{K}, {\bf x}}d\mu^{(n)}({\bf x})$$
		be the disintegration of $\mu^{(n)}$ over $\mathcal{I}_{\mu^{(n)}}(G)$ and $\mathcal{K}_{\mu^{(n)}}(G)$, respectively. By \eqref{eq-123},  for $\mu^{(n)}$-a.e. ${\bf x}\in X^{(n)}$,
		$$\mu_{\mathcal{K},{\bf x}}=\mu_{\pi^{(n)}(\bf x)},$$
		where $\mu_{\pi^{(n)}(\bf x)}:=\mu_{\pi(x_1)}\times\dots\times\mu_{\pi(x_n)}$ and $\mu=\int_{Z/H}\mu_yd\nu_{Z/H}(y)$ is the disintegration of $\mu$ over $\pi$. Then by \eqref{eq1}, for $\mu^{(n)}$-a.e. ${\bf x}\in X^{(n)}$,
		\begin{align}\label{eq-12131}\begin{split}
				\mu_{\mathcal{I}, {\bf x}}&=\int_{X^{(n)}} \mu_{\mathcal{K},{\bf x}'}d \mu_{\mathcal{I}, {\bf x}}({\bf x}')=\int_{X^{(n)}} \mu_{\pi^{(n)}({\bf x}')}d \mu_{\mathcal{I}, {\bf x}}({\bf x}')\\
				&=\int_{(Z/H)^{(n)}} \mu_{\bf y}d (\pi^{(n)})_*(\mu_{\mathcal{I}, {\bf x}})({\bf y}).
			\end{split}
		\end{align} 
		Since $\mu_{\mathcal{I}, {\bf x}}$ is ergodic for $\mu^{(n)}$-a.e. ${\bf x} \in X^{(n)}$, it follows that $(\pi^{(n)})_*(\mu_{\mathcal{I}, {\bf x}})$ is ergodic for $\mu^{(n)}$-a.e. ${\bf x} \in X^{(n)}$. Moreover,
		\begin{align*}
			\int_{X^{(n)}}(\pi^{(n)})_*(\mu_{\mathcal{I}, {\bf x}})d\mu^{(n)}({\bf x})=(\pi^{(n)})_*\left(\int_{X^{(n)}}\mu_{\mathcal{I}, {\bf x}}d\mu^{(n)}({\bf x})\right)=(\pi^{(n)})_*(\mu^{(n)})=\nu_{Z/H}^{(n)}.
		\end{align*} 
		Therefore, 	$\int_{X^{(n)}}(\pi^{(n)})_*(\mu_{\mathcal{I}, {\bf x}})d\mu^{(n)}({\bf x})$ is the ergodic decomposition of $\nu_{Z/H}^{(n)}$, which coincides with  $\int_{Z^{(n)}} \lambda_{\bf z}^{Z/H}d\nu_Z^{(n)}({\bf z})$ due to the uniqueness of ergodic decomposition of $\nu_{Z/H}^{(n)}$ and Lemma \ref{lem-er-decom-Z/H}. Thus, to show that $$\lambda_{\bf z}^{X}=\int_{(Z/H)^{(n)}}\mu_{{\bf y}}d\lambda_{\bf z}^{Z/H}({\bf y})$$ is ergodic for  $\nu_Z^{(n)}$-a.e. ${\bf z}\in Z^{(n)}$, it is equivalent to show that  $$\int_{(Z/H)^{(n)}} \mu_{\bf y}d (\pi^{(n)})_*(\mu_{\mathcal{I}, {\bf x}})({\bf y})$$ is ergodic for  $\mu^{(n)}$-a.e. ${\bf x}\in X^{(n)}$. 
		The later is straightforward from  \eqref{eq-12131} and the fact $\mu_{\mathcal{I}, {\bf x}}$ is the ergodic for  $\mu^{(n)}$-a.e. ${\bf x}\in X^{(n)}$.
		
		Moreover, by \eqref{de-lambda-zx-1} and Lemma \ref{lem-er-decom-Z/H},
		\begin{align*}
			\int_{Z^{(n)}}\lambda_{\bf z}^{X}d\nu_Z^{(n)}({\bf z})&=	\int_{Z^{(n)}}\int_{(Z/H)^{(n)}}\mu_{y_1}\times\ldots\times \mu_{y_n}d\lambda_{\bf z}^{Z/H}(y_1,\ldots,y_n)d\nu_Z^{(n)}({\bf z})\\
			&=\int_{(Z/H)^{(n)}}\mu_{y_1}\times\ldots\times \mu_{y_n}d\left(\int_{Z^{(n)}}\lambda_{\bf z}^{Z/H}(y_1,\ldots,y_n)d\nu_Z^{(n)}({\bf z})\right)\\
			&=\int_{(Z/H)^{(n)}}\mu_{y_1}\times\ldots\times \mu_{y_n}d\nu_{Z/H}^{(n)}(y_1,\ldots,y_n)\\
			&=\mu^{(n)}.
		\end{align*}
		Thus, $\int_{Z^{(n)}}\lambda_{\bf z}^{X}d\nu_Z^{(n)}({\bf z})$ is the ergodic decomposition of $\mu^{(n)}$,  proving Lemma \ref{lem-er-demco-Xn}.
	\end{proof}

	\subsection{Proof of Theorem \ref{cor:se=sm}}\label{subsec:provetheoremsm-2} 	
	Suppose that $(X,G)$ is a tds, where $G$ is amenable, and  $\mu\in\mathcal{M}^e(X,G)$. 	For  $U\in\mathcal{B}_X^\mu$, we define
	$$Z_U=\{z\in Z:\mu_{\tau(z)}(U)>0\}.$$
	\begin{lem}\label{lem-le0}
		Let $n\in\N$, ${\bf z}=(z_1,\ldots,z_n)\in Z^{(n)}$, and  $U_1,\ldots,U_n\in\Bm$.
		Then,
		$\lambda_{\bf z}^{X}(U_1\times\ldots\times U_n)>0$ if and only if 
		$\nu_Z(\cap_{i=1}^{n}(Z_{U_i}z_i^{-1}))>0.$
	\end{lem}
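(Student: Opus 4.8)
The plan is to unwind the definitions of $\lambda_{\bf z}^X$ and $\lambda_{\bf z}^Z$ so that the box value $\lambda_{\bf z}^X(U_1\times\cdots\times U_n)$ becomes a single integral over $Z$ against the Haar measure $\nu_Z$, and then to read off positivity factor by factor.

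First I would recall from \eqref{de-2-lambda-zz} that $\lambda_{\bf z}^Z=\int_Z\delta_{z{\bf z}}\,d\nu_Z(z)$ is the pushforward of $\nu_Z$ under the orbit map $z\mapsto z{\bf z}=(zz_1,\ldots,zz_n)$. Substituting this into \eqref{de-lambda-zx} and evaluating the product measure $\mu_{\tau(w_1)}\times\cdots\times\mu_{\tau(w_n)}$ on the box $U_1\times\cdots\times U_n$ gives
$$\lambda_{\bf z}^X(U_1\times\cdots\times U_n)=\int_Z\prod_{i=1}^n\mu_{\tau(zz_i)}(U_i)\,d\nu_Z(z).$$
Here I deliberately use a fresh integration variable $z$ in \eqref{de-lambda-zx}, to avoid clashing with the fixed components $z_1,\ldots,z_n$ of ${\bf z}$.

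The decisive observation is that the integrand $\prod_{i=1}^n\mu_{\tau(zz_i)}(U_i)$ is a product of numbers in $[0,1]$, hence itself lies in $[0,1]$ and is strictly positive exactly when every factor is. By the definition of $Z_{U_i}$, the factor $\mu_{\tau(zz_i)}(U_i)$ is positive if and only if $zz_i\in Z_{U_i}$, i.e.\ if and only if $z\in Z_{U_i}z_i^{-1}$; thus the integrand is positive precisely on the set $\bigcap_{i=1}^n(Z_{U_i}z_i^{-1})$. Since a nonnegative integrable function has positive integral if and only if it is positive on a set of positive measure, I conclude that $\lambda_{\bf z}^X(U_1\times\cdots\times U_n)>0$ if and only if $\nu_Z\bigl(\bigcap_{i=1}^n(Z_{U_i}z_i^{-1})\bigr)>0$, which is exactly the claim.

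There is no serious obstacle here; the only points requiring care are measurability issues, namely that each $Z_{U_i}$ is a $\nu_Z$-measurable subset of $Z$ and that the integrand above is a measurable function of $z$. Both follow from the measurability of the disintegration map $y\mapsto\mu_y$ (recorded when $\lambda_{\bf z}^X$ was defined) together with the continuity of the quotient map $\tau$.
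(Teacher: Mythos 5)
Your proof is correct and follows essentially the same route as the paper: both unwind the definitions \eqref{de-lambda-zx} and \eqref{de-2-lambda-zz} to express the box measure as an integral against the Haar measure $\nu_Z$, identify the set where the integrand $\prod_{i=1}^n\mu_{\tau(zz_i)}(U_i)$ is positive as $\bigcap_{i=1}^n(Z_{U_i}z_i^{-1})$, and invoke the fact that a nonnegative function has positive integral if and only if it is positive on a set of positive measure. The only difference is presentational: the paper performs the reduction in two steps (first to $\lambda_{\bf z}^{Z}(Z_{U_1}\times\cdots\times Z_{U_n})>0$, then to the $\nu_Z$-statement via the delta-measure formula), whereas you combine both substitutions into a single integral over $Z$.
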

	\begin{proof}By \eqref{de-lambda-zx}, $\lambda_{\bf z}^{X}(U_1\times\ldots\times U_n)>0$  if and only if 
		\begin{align*}\lambda_{\bf z}^{Z}(Z_{U_1}\times\ldots\times Z_{U_n})>0.
		\end{align*}
		By \eqref{de-2-lambda-zz}, it is also equivalent to that 
		$$\nu_Z(\cap_{i=1}^{n}(Z_{U_i}z_i^{-1}))=\int_{Z}\delta_{z{\bf z}}(Z_{U_1}\times\ldots\times Z_{U_n})d\nu_Z(z)>0,$$
		which proves Lemma \ref{lem-le0}.
	\end{proof}
	\begin{lem}\label{lem-le01}
		Let $n\in\N$, ${\bf z}=(z_1,\ldots,z_n)\in Z^{(n)}$, and let $U_1,\ldots,U_n\in\Bm$. If
		$\lambda_{\bf z}^{X}(U_1\times\ldots\times U_n)>2b$ for some $b>0$, then 
		$$\nu_Z(\cap_{i=1}^{n}(Z^b_{U_i}z_i^{-1}))>b,$$
		where $Z_U^b=\{z\in Z:\mu_{\tau(z)}(U)>b\}$ for  $U\in\Bm$.
	\end{lem}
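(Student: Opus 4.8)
The plan is to reduce the statement to an elementary integral estimate, parallel to the computation in Lemma~\ref{lem-le0} but keeping quantitative track of the constant $b$. First I would rewrite $\lambda_{\bf z}^{X}(U_1\times\cdots\times U_n)$ as a single integral over $Z$. Combining \eqref{de-lambda-zx} with the explicit form $\lambda_{\bf z}^Z=\int_Z\delta_{z{\bf z}}\,d\nu_Z(z)$ from \eqref{de-2-lambda-zz} (so that the $i$-th coordinate of the generic point of $\lambda_{\bf z}^Z$ is $zz_i$), one obtains
\begin{equation*}
\lambda_{\bf z}^{X}(U_1\times\cdots\times U_n)=\int_Z\prod_{i=1}^n\mu_{\tau(zz_i)}(U_i)\,d\nu_Z(z).
\end{equation*}
Setting $f_i(z):=\mu_{\tau(zz_i)}(U_i)\in[0,1]$, the hypothesis reads $\int_Z\prod_{i=1}^n f_i\,d\nu_Z>2b$.

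Next I would identify the target set. By the definition of $Z_U^b$, a point $z$ lies in $Z_{U_i}^b z_i^{-1}$ precisely when $zz_i\in Z_{U_i}^b$, i.e. when $\mu_{\tau(zz_i)}(U_i)>b$; hence
\begin{equation*}
E:=\cap_{i=1}^n\big(Z_{U_i}^b z_i^{-1}\big)=\{z\in Z: f_i(z)>b\text{ for all }i\}.
\end{equation*}
The goal thus becomes $\nu_Z(E)>b$.

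Finally I would split the integral according to $E$ and its complement. On $E^c$ there is some index with $f_i(z)\le b$, and since every factor lies in $[0,1]$ the whole product is $\le b$ there, while on $E$ the product is $\le 1$. Therefore
\begin{equation*}
2b<\int_Z\prod_{i=1}^n f_i\,d\nu_Z\le \nu_Z(E)\cdot 1+\nu_Z(E^c)\cdot b\le\nu_Z(E)+b,
\end{equation*}
which yields $\nu_Z(E)>b$, as required. I do not expect a genuine obstacle here; the only point demanding care is the bookkeeping in the integral representation — correctly using that $\lambda_{\bf z}^Z$ is the pushforward of $\nu_Z$ under $z\mapsto z{\bf z}$ so that the coordinate measures become $\mu_{\tau(zz_i)}$, and noting that the set $\cap_i(Z_{U_i}^bz_i^{-1})$ is exactly the super-level set of the vector $(f_i)$. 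Once this is in place, the estimate above is the entire content.
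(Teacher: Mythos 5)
Your proof is correct and is essentially the paper's own argument: both split the integral of the product $\prod_i\mu_{\tau(\cdot)}(U_i)$ according to whether all factors exceed $b$, bounding the product by $1$ on the good set and by $b$ on its complement to get $2b<\nu_Z(E)+b$. The only (cosmetic) difference is that you push the integral down to $Z$ via $\lambda_{\bf z}^Z=(p_{\bf z})_*\nu_Z$ at the outset, whereas the paper splits the integral over $Z^{(n)}$ with respect to $\lambda_{\bf z}^Z$ and converts to $\nu_Z$ only in the last line.
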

	\begin{proof}
		By \eqref{de-lambda-zx}, 
		\begin{align*}2b&<\lambda_{\bf z}^{X}(U_1\times\ldots\times U_n)=\int_{Z^{(n)}}\mu_{\tau(z_1)}(U_1)\times\ldots\times \mu_{\tau(z_n)}(U_n)d\lambda_{\bf z}^{Z}(z_1,\ldots,z_n)\\
			&=\left(\int_{Z_{U_1}^b\times\ldots\times Z_{U_n}^b}+\int_{Z^{(n)}\setminus( Z_{U_1}^b\times\ldots\times Z_{U_n}^b)}\right)\mu_{\tau(z_1)}(U_1)\times\ldots\times \mu_{\tau(z_n)}(U_n)d\lambda_{\bf z}^{Z}(z_1,\ldots,z_n)\\
			&\le \lambda_{\bf z}^{Z}(Z_{U_1}^b\times\ldots\times Z_{U_n}^b)+b.
		\end{align*}
		Thus, 
		$$\nu_Z(\cap_{i=1}^{n}(Z^b_{U_i}z_i^{-1}))=\lambda_{\bf z}^{Z}(Z_{U_1}^b\times\ldots\times Z_{U_n}^b)>b,$$
		which proves Lemma \ref{lem-le01}.
	\end{proof}

	\begin{proof}[Proof of Theorem \ref{cor:se=sm}]Let $\mathcal{F}=\{ F_n\}_{n\in\N}$ be a tempered F{\o}lner sequence    of $G$. Since $\text{MS}_K^\mu(X,\mathcal{F})\subset\text{SM}_K^\mu(X,\mathcal{F})$ obviously holds, it is sufficient to show that $\text{SE}_K^\mu(X,G)\subset \text{MS}_K^\mu(X,\mathcal{F})$ and $\text{SM}_K^\mu(X,\mathcal{F})\subset\text{SE}_K^\mu(X,G)$.

		Firstly, we prove $\text{SE}_K^\mu(X,G)\subset \text{MS}_K^\mu(X,\mathcal{F})$.	
		Let $(x_1,\ldots,x_K)\in\text{SE}_K^\mu(X,G)$, and let $U_1\times\dots \times U_K$ be an open neighborhood of $(x_1,\ldots,x_K)$.  By Proposition \ref{suppordisin}, $(x_1,\ldots,x_K)\in\text{supp}(\lambda_K^X) $, and so there exists $b>0$ such that $$\lambda_K^X\left(U_1\times \ldots \times U_K\right)=2b> 0.$$ Given  $V\in\Bm$ with $\mu(V)>0$, one has  $\nu_Z(Z_V)>0$.	
		By Lemma \ref{lem-le01}, $\nu_Z(\cap_{k=1}^KZ^b_{U_k})>b$, which together with Lemma \ref{ll} and Lemma \ref{lem-er-demco-Xn}, implies that there exists ${\bf z}=(z_1,\ldots,z_K)\in Z^K$ such that 
		\begin{enumerate}
		\item $\lambda_{\bf z}^X$ is ergodic;
		\item $\nu_Z(\cap_{k=1}^K(Z^b_{U_k}z_k^{-1}))>b$;
		\item 	$\nu_Z(\cap_{k=1}^K(Z_{V}z_k^{-1}))>0$.
		\end{enumerate}
	 The item (2)  implies that
		\begin{align}\label{eq01}
			\lambda_{\bf z}^{X}(U_1\times\ldots\times U_K)&\ge \int_{Z_{U_1}^b\times\ldots\times Z_{U_K}^b}\mu_{\tau(z_1')}(U_1)\times\ldots\times \mu_{\tau(z_K')}(U_K)d\lambda_{\bf z}^{Z}(z_1',\ldots,z_K')\notag\\
			&\ge b^K \lambda_{\bf z}^{Z}(Z_{U_1}^b\times\ldots\times Z_{U_K}^b)\notag\\
			&=b^K\nu_Z(\cap_{k=1}^K(Z^b_{U_k}z_k^{-1}))\\
			&>b^{K+1}\notag.
		\end{align}
		By Lemma \ref{lem-le0}, 	 the item (3)  implies that
		\begin{equation}\label{eq1558}
			\lambda_{\bf z}^{X}(V^K)>0.
		\end{equation}
		By Theorem \ref{tempered_pointwise_ergodic} and \eqref{eq1558}, 	 the item (1)  implies that  there exist $x_1',\ldots,x_K'\in V$ such that 
		\begin{align*}
			\limsup_{n\to\infty}\frac{1}{|F_n|}\sum_{g\in F_n}1_{U_1\times\ldots\times U_K}(gx_1',\ldots,gx_K')
			=\lambda_{\bf z}^{X}(U_1\times\ldots\times U_K)>b^{K+1}.
		\end{align*}
		Thus, $(x_1,\ldots,x_K)\in  MS_K^{\mu}(X,\mathcal{F})$.

		Now following ideas in \cite{LLTS2024},	we prove $\text{SM}_K^\mu(X,\mathcal{F})\subset\text{SE}_K^\mu(X,G).$ Suppose for the contradiction, that there exists  $(x_1,\ldots,x_K)\in \text{SM}_K^\mu(X,\mathcal{F})$ with  $(x_1,\ldots,x_K)\notin SE_K^{\mu}(X,G)$. There exists an admissible partition  $\alpha=\{A_1,\ldots,A_L\}$ of $(x_1,\ldots,x_K)$ such that  $h^*_\mu(G,\alpha)=0$. Put $E_k=\{1\le l\le L: x_k\not\in \overline{A_l}\}$ for $1\le k\le K$. By the definition of admissible partition, $\cup_{k=1}^K E_k=\{1,\ldots,L\}$. Set
		$$B_1=\cup_{k\in E_1}A_k, B_2=\cup_{k\in E_2\setminus E_1}A_k, \ldots,   B_K=\cup_{k\in E_K\setminus(\cup_{j=1}^{K-1}E_j)}A_k.
		$$
		Then $\beta=\{B_1,\ldots,B_K\}$ is also an admissible partition of $(x_1,\ldots,x_K)$ such that $x_k\notin \overline{B_k}$ for all $1\le k\le K$. Without loss of generality, we assume $B_k\neq \emptyset$ for  $1\le k\le K$. Since $\alpha$ is finer than $\beta$ and  $h^*_\mu(G,\alpha)=0$, one has $h^*_\mu(G,\beta)=0$. 
		By Theorem \ref{thm-se-enytopy},  we have $\beta\subseteq \mathcal{K}_\mu(G)$.
		
		Take $\epsilon>0$ such that $B_\epsilon(x_k)\cap B_k=\emptyset$ for   $1\le k\le K$. Since $(x_1,\ldots,x_K)\in SM_K^{\mu}(X,G)$,  there is $\delta:=\delta(\epsilon)>0$   such that for any $A\in \mathcal{B}_X^\mu$ with $\mu(A)>0$ there are $m\in\mathbb{N}$ and $y_1^m,\ldots,y_K^m\in A$ such that  $|C_m| \ge m\delta$, where  $C_m=\{g\in F_m:gy_k^m\in B_\epsilon(x_k)\text{ for all }k=1,2,\ldots,K\}$.
		
		Since $ B_\epsilon(x_1)\cap B_1=\emptyset$, it follows that $B_\epsilon(x_1)\subset \cup_{k=2}^KB_k$, which implies that there is $k_0\in \{2,\ldots,K\}$ such that
		$$
		|\{g\in C_m: gy_1^m\in B_{k_0} \}|\ge \frac{|C_m|}{K-1}.
		$$
		For any $g\in C_m$, we have $gy_{k_0}^m\in B_\epsilon(x_{k_0})$,  and then  $gy_{k_0}^m\notin B_{k_0}$, as  $B_\epsilon(x_{k_0})\cap B_{k_0}=\emptyset$. This implies that
		\begin{equation}\label{e1}
			\frac{1}{|F_m|}\sum_{g\in F_m}|1_{B_{k_0}}(gy_1^m)-1_{B_{k_0}}(gy_{k_0}^m)|\ge\frac{|C_m|}{m(K-1)}\ge \frac{\delta}{K-1}.
		\end{equation}
		
		On the other hand, as for each $1\le k\le K$, $1_{B_k}$ is an almost periodic function, by \cite[Lemma 2.5]{YZZ}, for  any $\tau>0$, there is a compact subset $L\subset X$ with $\mu(L)>1-\tau$ such that for any $\epsilon'>0$, there is $\delta'>0$  such that for all $m\in\mathbb{N}$, whenever   $x,x'\in L$ with $d(x,x')<\delta'$,
		\begin{equation}\label{3}
			\frac{1}{|F_m|}\sum_{g\in F_m}|1_{B_k}(gx)- 1_{B_k}(gx')|<\epsilon'.
		\end{equation}
		Let $\epsilon'=\frac{\delta}{2(K-1)}$ and choose $\delta'>0$ such that it satisfies \eqref{3} with respect to $\epsilon'.$	Given a measurable subset $A\subset L$ such that $\mu(A)>0$ and $\text{diam}(A):=\sup\{d(x,x'):x,x'\in A\}<\delta'$. Then for any $m\in\mathbb{N}$ and $x,x'\in A$,
		$$
		\frac{1}{|F_m|}\sum_{g\in F_m}|1_{B_{k_0}}(gx)- 1_{B_{k_0}}(gx')|<\frac{\delta}{2(K-1)},
		$$
		which is a contradiction with \eqref{e1}.
		Thus, $SM_K^{\mu}(X,G)\subset SE_K^{\mu}(X,T)$.
	\end{proof}
	
	\appendix
	\section{Proof of Proposition \ref{suppordisin}}
	In this section, we prove we prove Proposition \ref{suppordisin}. To do this, we need some lemmas.
	\begin{lem}\label{lem-admis}Let $(X,G)$ be a tds with $\mu\in\mathcal{M}(X,G)$, and  $\alpha=\{ A_1,\ldots,A_L\}\in\Pu$. Suppose $\mu=\int \mu_xd\mu(x)$ is the disintegration of $\mu$ over $\mathcal{K}_\mu(G)$. Denote $C_l:=\{x\in X: \mu_{x}(A_l)>0\}$ for $1\le l\le L$. Then, $h^*_\mu(G, \alpha)=0$ if and only if $\mu(C_l\cap C_{l'})=0$ for every $1\le l<l'\le L$.
	\end{lem}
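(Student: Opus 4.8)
The plan is to reduce everything to Theorem \ref{thm-se-enytopy}, which identifies the supremum sequence entropy of $\alpha$ with a conditional entropy over the Kronecker algebra. First I would invoke this theorem to replace the condition $h^*_\mu(G,\alpha)=0$ with $H_\mu(\alpha|\mathcal{K}_\mu(G))=0$. Then, using the defining property of the disintegration $\mu=\int_X\mu_x\,d\mu(x)$ over $\mathcal{K}_\mu(G)$, namely $\mathbb{E}_\mu(1_{A_l}|\mathcal{K}_\mu(G))(x)=\int_X 1_{A_l}\,d\mu_x=\mu_x(A_l)$ for $\mu$-a.e. $x$, I would rewrite
$$H_\mu(\alpha|\mathcal{K}_\mu(G))=\sum_{l=1}^L\int_X -\mu_x(A_l)\log\mu_x(A_l)\,d\mu(x).$$

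The key elementary fact is that $\phi(t)=-t\log t$ is nonnegative on $[0,1]$ and vanishes exactly at $t=0$ and $t=1$. Since the integrand above is a finite sum of nonnegative terms, the integral is zero if and only if each term vanishes $\mu$-a.e., i.e. if and only if $\mu_x(A_l)\in\{0,1\}$ for every $1\le l\le L$ and $\mu$-a.e. $x\in X$. The crucial constraint I would bring in next is that $\alpha$ is a partition, so $\sum_{l=1}^L\mu_x(A_l)=1$ for $\mu$-a.e. $x$; combined with the dichotomy $\mu_x(A_l)\in\{0,1\}$, this forces exactly one index $l$ with $\mu_x(A_l)=1$ and $\mu_x(A_{l'})=0$ for all $l'\neq l$, for $\mu$-a.e. $x$.

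Finally I would translate this pointwise-in-$x$ description into the statement about the sets $C_l=\{x:\mu_x(A_l)>0\}$. Having exactly one $l$ with $\mu_x(A_l)=1$ (hence positive) and all others equal to $0$ says precisely that $\mu$-a.e. $x$ lies in exactly one $C_l$; equivalently the $C_l$ are pairwise disjoint modulo $\mu$, that is, $\mu(C_l\cap C_{l'})=0$ for all $l<l'$. For the converse I would run the same chain backwards: if $\mu(C_l\cap C_{l'})=0$ for all $l\neq l'$, then $\mu$-a.e. $x$ belongs to at most one $C_l$, so at most one $\mu_x(A_l)$ is positive; since they sum to $1$ this single value must equal $1$, giving $\mu_x(A_l)\in\{0,1\}$ a.e., hence $H_\mu(\alpha|\mathcal{K}_\mu(G))=0$ and $h^*_\mu(G,\alpha)=0$. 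I do not anticipate a serious obstacle; the only point requiring a little care is the bookkeeping that converts the ``almost-everywhere exactly one index'' statement into the measure-zero intersections of the $C_l$, where one must invoke the partition identity $\sum_l\mu_x(A_l)=1$ in both directions rather than treating the $\{0,1\}$-dichotomy in isolation.
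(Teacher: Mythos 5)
Your proof is correct and follows essentially the same route as the paper: both reduce the statement via Theorem \ref{thm-se-enytopy} to the vanishing of $H_\mu(\alpha|\mathcal{K}_\mu(G))$ and then exploit that $-t\log t$ vanishes only at $t\in\{0,1\}$, applied to $\mu_x(A_l)=\mathbb{E}_\mu(1_{A_l}|\mathcal{K}_\mu(G))(x)$ together with the partition identity $\sum_l \mu_x(A_l)=1$. The only cosmetic difference is in the converse direction, where the paper shows $\mu(C_l\triangle A_l)=0$ and concludes $\alpha\subset\mathcal{K}_\mu(G)$, whereas you verify directly that each integrand in the conditional entropy vanishes $\mu$-almost everywhere.
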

	\begin{proof}Assume that there exist $1\le l<l'\le L$ such that $\mu(C_l\cap C_{l'})>0$. Then 
		for $x\in C_l\cap C_{l'}$, $\mu_x(A_l)>0$ and $\mu_x(A_{l'})>0$, which implies that $\mu_x(A_l)\in(0,1)$, i.e., $A_l\notin \mathcal{K}_\mu(G)$. Hence, by Theorem \ref{thm-se-enytopy}, $h_\mu^*(G,\alpha)>0$.
		
		Conversely, assume $\mu(C_l \cap C_{l'}) = 0$ for all $1 \leq l < l' \leq L$. 
		Since $C_{l} \in \mathcal{K}_\mu(G)$, the integral decomposition gives:
		$$
		\mu(C_{l} \cap A_{l'}) = \int_{C_{l}} \mu_x(A_{l'}) \, d\mu(x) = 
		\begin{cases} 
			0, & l' \neq l \\
			\mu(A_{l'}), & l' = l
		\end{cases}
		$$
		This implies $\mu(C_{l} \triangle A_{l}) = 0$. 
		Consequently, $\alpha \subset \mathcal{K}_\mu(G)$, and thus $h^*_\mu(G, \alpha) = 0$ by Theorem \ref{thm-se-enytopy}.
	\end{proof}
	Now  we prove Proposition \ref{suppordisin}.
	\begin{proof}[Proof of Proposition \ref{suppordisin}]Firstly we show that $\operatorname{supp}(\lambda_K^X )\setminus \Delta_K(X)\subset  SE_K^{\mu}(X, G)$. Assume to the contrary that there exist $(x_1,\ldots,x_K)\in \operatorname{supp}(\lambda_K^X )\setminus \Delta_K(X)$, and    an admissible partition $\alpha=\{ A_1,\ldots,A_L\}$ with respect to $(x_1,\ldots,x_K)$, such that $h^*_\mu(G,\alpha)=0$.   By the argument similar to that in the proof of Theorem \ref{cor:se=sm}, we can find an admissible partition  $\beta=\{B_1,\ldots,B_K\}$ with respect to  $(x_1,\ldots,x_K)$ such that  $h^*_\mu(G,\beta)=0$ and $x_k\notin \overline{B_k}$ for all $1\le k\le K$. Without loss of generality, we assume $B_k\neq \emptyset$ for  $1\le k\le K$.
		
		Set $C_k=\{x\in X: \mu_{x}(B_k)>0\}$ for $1\le k\le K$. Since  $h^*_\mu(G,\beta)=0$, by Lemma \ref{lem-admis}, 
		$$\mu(C_k\cap C_{k'})=0\text{ for }1\le k<k'\le K.$$
		Thus, for $s\ge 2$ and $1\le k_1<k_2<\ldots<k_s\le K$,
		\begin{align*}
			\int_{X^{(K)}} \Pi_{t=1}^s1_{B_{k_t}}(x_{k_t})d \lambda_K^X(x_1,\ldots,x_K)=0,
		\end{align*}
		which shows that
		\begin{align*}
			&\int_{X^{(K)}} \Pi_{k=1}^K1_{X\setminus B_k}(x_k)d \lambda_K^X(x_1,\ldots,x_K)\\
			&=1-\sum_{k=1}^K\int_{X^{(K)}} 1_{B_k}(x_k)d \lambda_K^X(x_1,\ldots,x_K)\\
			&=1-\sum_{k=1}^K\mu(B_k)=0.
		\end{align*}
		So $\lambda_K^X(\Pi_{k=1}^K(X\setminus \overline{B_k}))=0$. This is impossible, since $(X\setminus \overline{B_1})\times \ldots\times (X\setminus \overline{B_K})$ is an open neighborhood of  $(x_1,\ldots,x_K)\in\operatorname{supp}(\lambda_K^X )$. Thus, $\operatorname{supp}(\lambda_K^X )\setminus \Delta_K(X)\subset  SE_K^{\mu}(X, G)$. 
		
		Secondly we show that $\text{SE}_K^\mu(X,G)\subset\operatorname{supp}(\lambda_K^X )\setminus \Delta_K(X)$. We only need to prove if  $(x_1,\ldots,x_K)\notin \operatorname{supp}(\lambda_K^X )\setminus \Delta_K(X)$, then $(x_1,\ldots,x_K)\notin SE_K^{\mu}(X, G)$. Now fix $(x_1,\ldots,x_K)\notin \operatorname{supp}(\lambda_K^X )\setminus \Delta_K(X)$. It is clear true in the case  $(x_1,\ldots,x_K)\in \Delta_K(X)$. So we assume that  $(x_1,\ldots,x_K)\notin \operatorname{supp}(\lambda_K^X )$. We will construct an admissible partition $\alpha$ with respect to $(x_1,\ldots,x_K)$ such that  $h^*_\mu(G,\alpha)=0$. Since   $(x_1,\ldots,x_K)\notin \operatorname{supp}(\lambda_K^X )$, there exists an open neighborhood $U_1\times\ldots\times U_K$ of $(x_1,\ldots,x_K)$ such that $\lambda_K^X(U_1\times\ldots\times U_K )=0$. Let $\mu=\int_X\mu_xd\mu(x)$ be the disintegration of $\mu$ over $\mathcal{K}_\mu(G)$.  Let $C_k=\{x\in X: \mu_{x}(U_k)>0\}\in\mathcal{K}_{\mu}(G)$ for $k=1,\ldots,K$. Then, $D_k:=U_k\cup C_k\in\mathcal{K}_{\mu}(G)$, as
		\begin{align*}
			\mu( U_k\setminus C_k)
			=\int_{X}\mu_{x}(U_k\cap C_k^c)d \mu(x)
			=0.
		\end{align*}
		Since  $\lambda_K^X(U_1\times\ldots\times U_K )=0$, one has 
		\begin{align}\label{eq-121311}\mu( \cap_{k=1}^K D_k)=\mu( \cap_{k=1}^K C_k)=0.\end{align}
		For any $\textbf{s}=(s(1),\ldots,s(K))\in \{0,1\}^K$, let $D_{\textbf{s}}=\cap_{k=1}^KD_k\left(s(k)\right)$, where $D_k(0)=D_k$ and $D_k(1)=D_k^c$. 
		Consider 
		$$\alpha=\left\{D_\textbf{s}:\textbf{s}\in\{0,1\}^K\setminus\{(0,\ldots,0)\}\right\}.$$
		By \eqref{eq-121311}, 
		$$\sum_{A\in\alpha}\mu(A)=1-\mu(D_{(0,\ldots,0)})=1-\mu( \cap_{k=1}^K D_k)=1.$$
		Hence, $\alpha\in \Pu$. For any $\textbf{s}=(s(1),\ldots,s(K))\in \{0,1\}^K\setminus\{(0,\ldots,0)\}$, we have $s(k)=1$ for some $k=1,\ldots,K$. Then $D_\textbf{s}\subset D_k^c\subset U_k^c$, implying that $x_k\notin \overline{D_\textbf{s}}$. Hence, $\alpha$ is  admissible   with respect to $(x_1,\ldots,x_K)$. Since $D_k\in\mathcal{K}_\mu(G)$, for $1\le k\le K$, one has $\alpha\subset \mathcal{K}_\mu(G)$, and then  by Theorem \ref{thm-se-enytopy}, $h^*_\mu(G,\alpha)=0$. Therefore, $(x_1,\ldots,x_K)\notin SE_K^{\mu}(X, G)$. Now we finish the proof.
	\end{proof}
	
	\noindent{\bf Acknowledgment.} 
	The authors thank Professor Garc\'{\i}a-Ramos for his many valuable suggestions on earlier versions of this paper.
This paper is  supported by National Key R\&D Program of China (No. 2024YFA1013602, 2024YFA1013600) and NNSF of China (12031019, 12371197, 12426201). 
	
\end{document}